\theoremstyle{plain}
\newtheorem{theorem}{Theorem}[section]
\newtheorem{lemma}[theorem]{Lemma}
\newtheorem{proposition}[theorem]{Proposition}
\theoremstyle{definition}
\newtheorem{remark}[theorem]{Remark}
 \newcommand{\al}{\alpha}
\def\bN{\hbox{$\mathbb N$}}
\def\bZ{\hbox{$\mathbb{Z}$}}
\def\tensor{\hbox{$\widehat{\otimes}$}}
\def\ho{\hbox{$\hat\omega$}}
\def\L1o{\hbox{$L^1(G,\omega)$}}
\def\l1o{\hbox{$\ell^1(G,\omega)$}}
\def\Ld1o{\hbox{$L^1(G\times G, {\omega\times \omega})$}}
\def\ld1o{\hbox{$\ell^1(G\times G, {\omega\times \omega})$}}
\def\LH1o{\hbox{$L^1(G/H, {\ho})$}}
 \def\C0o{\hbox{$C_0(1/\omega)$}}
 \def\Lio{\hbox{$L^\infty\bigl(G,\frac1{\omega}\bigr)$}}
\newcommand{\R}{\mathbb{R}}
\newcommand{\eps}{\varepsilon}
 \renewcommand{\leq}{\leqslant}
\renewcommand{\geq}{\geqslant}
\newcounter{equi1}
\newcommand{\bea}{\begin{eqnarray*}}
\newcommand{\eea}{\end{eqnarray*}}
\newcommand{\beq}{\begin{equation}}
\newcommand{\eeq}{\end{equation}}
\newcommand{\begsta}{\begin{statements}}
\def\endsta{\end{statements}}
\newcommand{\begaeq}{\begin{aequivalenz}}
\def\endaeq{\end{aequivalenz}}
\begin{document}
\title[Central Beurling algebras]{Weak amenability of the central Beurling algebras on [FC]$^-$ groups}

\keywords{weighted group algebras, center, [FC]$^-$ groups, [FD]$^-$ groups, compactly generated, tensor product}

\subjclass[2010]{Primary 46H20, 43A20. Secondary 43A10.}

\author[V. Shepelska]{Varvara Shepelska\dag}
\email{\dag\; shepelska@gmail.com}

\author[Y. Zhang]{ Yong Zhang \ddag}
\email{\ddag\; zhangy@cc.umanitoba.ca}
\thanks{\ddag \; Supported by NSERC Grant 238949}
\address{Department of Mathematics\\
           University of Manitoba\\
           Winnipeg, Manitoba\\
           R3T 2N2 Canada}

\date{November 24, 2014}

\begin{abstract}
We study weak amenability of central Beurling algebras $ZL^1(G,\omega)$. The investigation is a natural extension of  the known work on the commutative Beurling algebra $L^1(G,\omega)$. For [FC]$^-$ groups we establish a necessary condition and for [FD]$^-$ groups we give sufficient conditions for the weak amenability of $Z\L1o$. For a compactly generated [FC]$^-$ group with the polynomial weight $\omega_\alpha(x) = (1 + |x|)^\alpha$, we prove that $ZL^1(G,\omega_\alpha)$ is weakly amenable if and only if $\alpha < 1/2$.
\end{abstract}

\maketitle

\section{Introduction}\label{Intro}

Let $G$ be a locally compact group. As it is customary, two functions equal to each other almost everywhere on $G$ with respect to the Haar measure will be regarded as the same. We denote the integral of a function $f$ on a (Borel) measurable subset $K$ of $G$ against a fixed left Haar measure by $\int_Kfdx$. The space of all complex valued Haar integrable functions on $G$ is denoted by $L^1(G)$. 
A weight on $G$ is a Borel measurable function $\omega:G\to\mathbb{R}^{+}$ satisfying 
\[
\omega(xy)\le\omega(x)\omega(y) \quad (x,y\in G).
\]
Given a weight $\omega$ on $G$, we consider the space $\L1o$ of all complex valued Haar measurable functions $f$ on $G$ that satisfy
\[
\|f\|_\omega = \int{|f(x)|\omega(x)dx} < \infty.
\]
With the convolution product $*$ and the norm $\|\cdot\|_\omega$, $\L1o$ is a Banach algebra, called a Beurling algebra on $G$. When $\omega = 1$, this is simply the group algebra $L^1(G)$. Let $Z\L1o$ be the closed subalgebra of $\L1o$ consisting of all $f\in \L1o$ such that $f^g=f$ for all $g\in G$, where $f^g(x) = f(g^{-1}xg)$ ($x\in G$). Then $Z\L1o$ is a commutative Banach algebra, called a \emph{central Beurling algebra} \cite{LM}. Indeed, $Z\L1o$ is the center of $\L1o$. It is well known that $Z\L1o$ is non-trivial if and only if $G$ is an [IN] group \cite{Mosak}. 

From 
\cite[Remark~8.8]{GLZ}, a measurable weight $\omega$ on $G$ is always equivalent to a continuous weight $\tilde\omega$ on $G$, where the equivalence means that there are constants $c_1, c_2 >0$ such that $c_1\omega(x) \leq \tilde\omega(x) \leq c_2\omega(x)$ for almost all $x\in G$. The equivalence implies that the respective Beurling algebras $\L1o$ and $L^1(G,\tilde\omega)$ are isomorphic as Banach algebras. So are the central Beurling algebras $Z\L1o$ and $ZL^1(G, \tilde\omega)$. For this reason, in our investigation we will most time assume the weight $\omega$ to be a continuous function.

The study of $\L1o$ goes back to A. Beurling \cite{Beurling1938}, where $G=\mathbb{R}$ was considered. One may find a good account of elementary theory concerning the general weighted group algebra in \cite{RS}. Structure of the center of group algebras and the central Beurling algebras were substantially studied in \cite{LM, LM_center}.

Amenable Beurling algebras are essentially isomorphic to amenable group algebras \cite{Gronbaek, White}. This is no longer true for the weak amenability of them. Weak amenability of Beurling algebras for commutative groups has been extensively investigated and are well characterized \cite{BCD, Gronbaek2, Samei, Zhang}, while  for non-commutative groups, one hardly sees a non-trivial example of a weakly amenable Beurling algebra. Recent investigations are in \cite{Borwick, Shepelska, SZ1}.

We are concerned with weak amenability of central Beurling algebras $Z\L1o$. When $G$ is commutative, $Z\L1o$ coincides with $\L1o$. So investigation of weak amenability for $Z\L1o$ is a natural extension of the study for the commutative groups. We notice that some investigations on amenability and weak amenability of $ZL^1(G)$, the case with the trivial weight $\omega =1$, have been made recently in \cite{ASSp} and \cite{ A-C-S}. All theses studies even for amenability have answers only for particular cases of locally compact groups, in particular the ones in \cite{ASSp} and \cite{ A-C-S} are only on compact and some discrete groups.

We will mainly focus on [FC]$^-$ groups (groups with precompact conjugacy classes). 
We note that for the trivial weight $\omega =1$ it has been shown in \cite{ASSp} that $ZL^1(G)$ is always weakly amenable for an [FC]$^-$ group $G$. When $G$ is compact, the result has a simple direct proof.  In fact, $ZL^1(G)$ is generated by its minimal idempotents if $G$ is compact. So by a simple observation (see \cite[Section~7]{Johnson}), $ZL^1(G, \omega)\simeq ZL^1(G)$ is always weakly amenable for compact $G$. 

We recall here some standard notions concerning a locally compact group. More details regarding them may be found in \cite{GM2, GM, Palmer}.

%
%

For a locally compact group $G$,
\begin{enumerate}
\item $G$ is an [IN] group if there is a compact neighbourhood of the identity that is invariant under all inner automorphisms;
\item $G$ is a [SIN] group if there is a compact neighbourhood basis of the identity such that each member of the basis is invariant under inner automorphisms;
\item $G$ is an [FC]$^-$ group if the conjugacy class $\{gxg^{-1}: g\in G\}$ for each $x\in G$ has a compact closure in $G$;
\item $G$ is an [FD]$^-$ group if the closure of the commutator subgroup $G'$ of $G$ is compact in $G$ (where the commutator subgroup of $G$ is the subgroup generated by all elements of the form $xyx^{-1}y^{-1}$, $x,y\in G$).
\end{enumerate}

Obviously, $G$ is an [FD]$^-$ group if and only if there is a compact normal subgroup $K$ such that $G/K$ is abelian.
It is also obvious that [FD]$^-\subseteq$ [FC]$^-$. The less obvious inclusion [FC]$^-\subseteq$ [IN] was shown in \cite[Proposition~3.1]{Liukkonen}. 

In Section~2 we will be devoted to considering [FC]$^-$ groups. We will show that the projective tensor product $ZL^1(G_1,\omega_1)\tensor ZL^1(G_2,\omega_2)$ is weakly amenable if and only if both $ZL^1(G_1,\omega_1)$ and $ZL^1(G_2,\omega_2)$ are weakly amenable and, under some conditions, $ZL^1(G_1\times G_2,\omega_1\times\omega_2)\simeq ZL^1(G_1,\omega_1)\tensor ZL^1(G_2,\omega_2)$. We will also show, among others, that a condition characterizing the weak amenability of the Beurling algebra on a commutative group remains necessary for central Beurling algebras on [FC]$^-$ groups. In section~3 we will focus on the case for [FD]$^-$ groups, establishing some sufficient conditions for $Z\L1o$ to be weakly amenable for this sort of groups $G$. For a compactly generated [FC]$^-$ group $G$, we will consider the polynomial weight $\omega_\alpha(x) = (1+|x|)^\alpha$. We will show that $ZL^1(G,\omega_\alpha)$ is weakly amenable if and only if $0\leq \alpha < 1/2$. This last result notably generalizes \cite[Theorem~2.4.(iii)-(iv)]{BCD}.

\section{Central Beurling algebras on [FC]$^-$ groups}

Let $G$ be a locally compact group, and let $Aut(G)$ be the set of all topological automorphisms of $G$ onto itself.   For any compact set $F$ of $G$ and any open neighbourhood $U$ of $e$ in $G$ we denote
\[
B(F,U) = \{\tau\in Aut(G): \tau(x)\in Ux, \tau^{-1}(x)\in Ux \text{ for all } x\in F\}.
\]
The family of all subsets of the form $B(F,U)$ forms an open neighbourhood basis at the identity $\imath$ of  $Aut(G)$.
This defines a Hausdorff topology on $Aut(G)$, called the open compact topology on $Aut(G)$. It is well-known that this topology is completely regular \cite[Theorems~4.8 and 8.4]{H-R}. With this topology $Aut(G)$ is a topological group (but it may not be locally compact) \cite[Theorems~26.5 and 26.6]{H-R}.  All inner automorphisms of $G$ form a (completely regular) topological subgroup of $Aut(G)$, denoted by $I(G)$. For $x\in G$, let $\beta_x$ be the inner automorphism of $G$ implemented by $x$, i.e.
 $$\beta_x(g) = xgx^{-1} \quad (g\in G). $$
Clearly, the natural mapping $x\mapsto  \beta_x$ is a continuous group anti-homomorphism from $G$ onto $I(G)$, so that
$x\mapsto (\beta_x)^{-1}= \beta_{x^{-1}}$ is a continuous group homomorphism from $G$ onto $I(G)$. 

Let $S$ be a semitopological semigroup, i.e. $S$ is a semigroup with a Hausdorff topology such that the product $s\cdot t$ is separately continuous. Obviously, a topological group is a semitopological semigroup, in particular, $I(G)$ belongs to this class.
The space $C_b(S)$ of all bounded complex-valued continuous functions on $S$ forms a Banach space with the supremum norm 
\[
\|f\|_{\sup} = \sup_{s\in S}|f(s)| \quad (f\in C_b(S)).
\]
Indeed, with the pointwise product $C_b(S)$ is a unital commutative C*-algebra whose identity is the constant function $1$.
Let $s\in S$ and $f\in C_b(S)$. The \emph{left translate} $\ell_sf$ of $f$ by $s$ is the function $\ell_sf(x) = f(sx)$ ($x\in S$). The {\em right translate} $r_sf$ by $s$ is defined similarly. Clearly for each $s\in S$, $\ell_s$ and $r_s$ define bounded operators on $C_b(S)$. 
A positive linear functional $m\in C_b(S)^*$ is called a \emph{left invariant mean} on $C_b(S)$ if $\|m\| = m(1) = 1$ (i.e. $m$ is a mean) and $m(\ell_sf) =m(f)$ (i.e. $m$ is left invariant) for all $f\in C_b(S)$ and all $s\in S$. Similarly, a \emph{right invariant mean} on $C_b(S)$ is a a mean $m\in C_b(S)^*$ satisfying $m(r_sf) =m(f)$ 
for all $f\in C_b(S)$ and all $s\in S$. For a locally compact group $G$, it is well-known that $G$ is amenable if and only if $C_b(G)$ has an invariant mean, a mean on $C_b(G)$ which is both left invariant and right invariant \cite{greenleaf}.

For two semitopological semigroups $S$ and $H$, it is readily seen that if $C_b(S)$ has a left invariant mean and if there is a continuous semigroup homomorphism from $S$ onto $H$, then $C_b(H)$ has a left invariant mean. It is also readily seen that if there is a continuous anti-homomorphism from $S$ onto $H$, then the existence of a right invariant mean on $C_b(S)$ implies the existence of a left invariant mean on $C_b(H)$.

Let $G$ be a locally compact group, and let $f\in C_b(G)$. Then, for each $x\in G$ we have $\hat f_x \in C_b(I(G))$, where $\hat f_x(\beta) = f(\beta^{-1}(x))$ ($\beta\in I(G)$).
Suppose further that $G$ is an [FC]$^-$ group. Then, as is well-known, $G$ is amenable (see \cite{Palmer} or \cite{Leptin}). The above discussion shows that  $C_b(I(G))$ has a left invariant mean, say $\mu\in C_b(I(G))^*$. 
Note that $C_b(I(G))^* = M(\delta I(G))$, where $\delta I(G)$ is the Stone-C\v{e}ch compactification of $I(G)$ \cite[Corollary~V.6.4]{Conway}. 
 Let $f\in C_{00}(G)$ and $K = \text{supp}(f)$. Then
 \[
 C_K = cl\{\beta(x): \beta\in I(G), x\in K \}
 \]
 is a compact subset of $G$
 and the function $(\beta,x)\mapsto f(\beta^{-1}(x))$ is a continuous function on $I(G)\times G$ whose support sits in $I(G)\times C_K$. As explained above, we may regard the left invariant mean $\mu$ as in $M(\delta I(G))$. Restricting $\mu$ to $I(G)$, we obtain a positive finite Borel (probability) measure space $(I(G), \mu)$. Note that $C_K$ is of finite Haar measure as a compact subset of $G$. We then may apply Fubini's Theorem to the function $(\beta,x)\mapsto f(\beta^{-1}(x))$ on $I(G)\times C_K$ and define $P_\mu(f)$  by
\begin{equation}\label{projection}
P_\mu(f)(x) = \mu(\hat f_x) \quad (x\in G).
\end{equation}
Clearly $\text{supp}(P_\mu(f)) \subset C_K$ and $P_\mu(f)\in L^1(G)$. By the left invariance of $\mu$ it is readily seen that $P_\mu(f)(\beta^{-1}(x))=P_\mu(f)(x)$ ($\beta\in I(G)$ and $x\in G$). Whence $P_\mu(f) \in ZL^1(G)$. Moreover, the Fubini Theorem (\cite[Theorem~13.8]{H-R}) asserts that
\[
\|P_\mu(f)\|_1 \leq \|f\|_1.
\]
Since $C_{00}(G)$ is dense in $L^1(G)$, $P_\mu$ extends to a bounded linear operator from $L^1(G)$ into $ZL^1(G)$, still denoted by $P_\mu$. It is also easily seen that $P_\mu(f) = f$ when $f\in ZL^1(G)$. Therefore $P_\mu$: $L^1(G)\to ZL^1(G)$ is a Banach space contractive projection. Although $P_\mu$ is usually not a Banach algebra homomorphism, it is a $ZL^1(G)$-bimodule morphism if we view both $L^1(G)$ and $ZL^1(G)$ as natural $ZL^1(G)$-bimodules. But we will not use this feature. 

If $\mu_i$ is a left invariant mean on $C_b(I(G_i))$ ($i=1,2$), then $\mu_1\times \mu_2$ is a left invariant mean on $C_b(I(G_1)\times I(G_2))$. Note $I(G_1) \times I(G_2) = I(G_1\times G_2)$. This generates a contractive projection $P_{\mu_1\times \mu_2}$ from $L^1(G_1\times G_2)$ onto $ZL^1(G_1\times G_2)$. On the other hand, the mapping $f\otimes g \mapsto f\times g$ ($f\in L^1(G_1), g\in L^1(G_2)$) defines  a Banach algebra isometry $T$: $L^1(G_1) \tensor L^1(G_2) \to L^1(G_1\times G_2)$ which maps $ZL^1(G_1)\otimes ZL^1(G_2)$ into $ZL^1(G_1\times G_2)$, where $f\times g$ denotes the function
$$f\times g(x,y) = f(x)g(y) \quad (x\in G_1, y\in G_2).$$
Since $ZL^1(G_i)$ is complemented in $L^1(G_i)$, the inclusion mappings 
$$\imath_i: ZL^1(G_1)\to L^1(G_i)\quad  (i=1,2)$$
 induce a norm preserving Banach algebra embedding:
$$
\imath_1 \otimes \imath_2: ZL^1(G_1) \tensor ZL^1(G_2) \to L^1(G_1) \tensor L^1(G_2).
$$
We warn here that, in general for closed subspaces $B_i$ of Banach spaces $A_i$ and embeddings $\imath_i$: $B_i\to A_i$ ($i=1,2$), $\imath_1\otimes \imath_2$: $B_1\tensor B_2\to A_1\tensor A_2$ is not necessarily an embedding; it may not be even injective (see  \cite{Zhang_1999}).
Denote the inclusion mapping $ZL^1(G_1\times G_2)\to L^1(G_1\times G_2)$ by $\imath$. Then one can easily verify that 
$$
 \imath\circ P_{\mu_1\times \mu_2}\circ T = T\circ \imath_1\otimes \imath_2\circ P_{\mu_1}\times P_{\mu_2}.
$$


\begin{lemma}\label{centers}
Let $G_1$ and $G_2$ be locally compact [FC]$^-$ groups. Then, as Banach algebras,
$$
ZL^1(G_1\times G_2)\simeq ZL^1(G_1)\hat{\otimes}ZL^1(G_2).
$$
\end{lemma}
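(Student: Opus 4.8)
The plan is to show that the Banach algebra isometry $T\colon L^1(G_1)\tensor L^1(G_2)\to L^1(G_1\times G_2)$ restricts to an isometric isomorphism between $ZL^1(G_1)\tensor ZL^1(G_2)$ (identified with its image under $\imath_1\otimes\imath_2$) and $ZL^1(G_1\times G_2)$. We already know from the discussion preceding the lemma that $\imath_1\otimes\imath_2$ is a norm-preserving Banach algebra embedding, that $T$ is a Banach algebra isometry, and that $T$ maps $ZL^1(G_1)\otimes ZL^1(G_2)$ into $ZL^1(G_1\times G_2)$; so $T\circ(\imath_1\otimes\imath_2)$ is an isometric algebra homomorphism from $ZL^1(G_1)\tensor ZL^1(G_2)$ into $ZL^1(G_1\times G_2)$. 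The only thing left is \emph{surjectivity}.

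For surjectivity I would exploit the projection identity displayed just before the statement, namely
\[
\imath\circ P_{\mu_1\times\mu_2}\circ T \;=\; T\circ(\imath_1\otimes\imath_2)\circ(P_{\mu_1}\times P_{\mu_2}).
\]
Given any $h\in ZL^1(G_1\times G_2)$, pick (by density of the algebraic tensor product, or rather by density of $C_{00}(G_1)\odot C_{00}(G_2)$ in $L^1(G_1\times G_2)$ via $T$) an element $u\in L^1(G_1)\tensor L^1(G_2)$ with $T(u)=h$. Since $h\in ZL^1(G_1\times G_2)$ we have $P_{\mu_1\times\mu_2}(h)=h$, hence $\imath(h)=\imath\circ P_{\mu_1\times\mu_2}\circ T(u)=T\circ(\imath_1\otimes\imath_2)\circ(P_{\mu_1}\times P_{\mu_2})(u)$. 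Thus $h$ lies in the image of $T\circ(\imath_1\otimes\imath_2)$, because $(P_{\mu_1}\times P_{\mu_2})(u)$ is an element of $ZL^1(G_1)\tensor ZL^1(G_2)$: here one uses that $P_{\mu_1}$ maps $L^1(G_1)$ onto $ZL^1(G_1)$ and $P_{\mu_2}$ maps $L^1(G_2)$ onto $ZL^1(G_2)$, so $P_{\mu_1}\otimes P_{\mu_2}$ is a bounded operator on $L^1(G_1)\tensor L^1(G_2)$ with range inside $ZL^1(G_1)\tensor ZL^1(G_2)$ (the range of a projective tensor product of two bounded projections lands in the projective tensor product of the ranges). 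This establishes that $T\circ(\imath_1\otimes\imath_2)$ is onto $ZL^1(G_1\times G_2)$, and being also isometric, it is the desired isomorphism.

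I expect the main technical point — the one step that is not pure formality — to be the justification that $P_{\mu_1}\otimes P_{\mu_2}$ has range exactly inside $ZL^1(G_1)\tensor ZL^1(G_2)$, sitting inside $L^1(G_1)\tensor L^1(G_2)$ in the way that makes the identity above meaningful; concretely, one must check that $\imath_1\otimes\imath_2$ composed with $P_{\mu_1}\times P_{\mu_2}$ equals $(\imath_1\circ P_{\mu_1})\otimes(\imath_2\circ P_{\mu_2})$ as maps on the projective tensor product, which is routine from functoriality of $\tensor$ but deserves a sentence. A secondary point worth a remark is that although we warned that $\imath_1\otimes\imath_2$ need not be injective for arbitrary subspaces, here it \emph{is} an embedding because each $ZL^1(G_i)$ is complemented in $L^1(G_i)$ by the contractive projection $P_{\mu_i}$ — this is precisely the hypothesis that rescues us, and it is exactly why we needed the [FC]$^-$ assumption (which gives amenability of $G_i$, hence the invariant mean $\mu_i$, hence the projection $P_{\mu_i}$). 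Once these are in place the proof is essentially a diagram chase combined with the already-established isometry statements, so no lengthy computation remains.
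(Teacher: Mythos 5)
Your proposal is correct and follows essentially the same route as the paper: the paper's proof is exactly the composition of the chain $ZL^1(G_1)\tensor ZL^1(G_2)\xrightarrow{\imath_1\otimes\imath_2}L^1(G_1)\tensor L^1(G_2)\xrightarrow{T}L^1(G_1\times G_2)\xrightarrow{P_{\mu_1\times\mu_2}}ZL^1(G_1\times G_2)$, with injectivity/isometry coming from the already-established embedding statements and surjectivity from the displayed identity $\imath\circ P_{\mu_1\times\mu_2}\circ T=T\circ(\imath_1\otimes\imath_2)\circ(P_{\mu_1}\times P_{\mu_2})$. You have simply written out the details that the paper compresses into ``from the above discussion,'' including the correct observation that complementedness of $ZL^1(G_i)$ in $L^1(G_i)$ is what makes $\imath_1\otimes\imath_2$ an embedding.
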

\proof

Consider the following chain:
\[
ZL^1(G_1) \tensor ZL^1(G_2) \overset{\imath_1\otimes\imath_2}{\longrightarrow} L^1(G_1) \tensor L^1(G_2)\overset{T}{\to} L^1(G_1\times G_2) \overset{P_{\mu_1\times\mu_2}}{\longrightarrow} ZL^1(G_1\times G_2).
\]
From the above discussion the composition of the chain provides a Banach algebra isomorphism (in fact, isometric isomorphism) from $ZL^1(G_1) \tensor ZL^1(G_2)$ onto $ZL^1(G_1\times G_2)$. 
\qed

Here we note that Lemma~\ref{centers} generalizes former known results for compact and discrete cases in \cite{ASSp}  and \cite{A-C-S}.

Now we consider the weighted case. If $\omega$ is a continuous weight on the [FC]$^-$ group $G$, then for each $x\in G$ there is a constant $c_x\geq 1$ such that 
$$\omega(\beta (x)) \leq c_x \omega(x)\quad  (\beta\in I(G)).$$
 Assume that there is an upper bound for all $c_x$. We then have the following.


\begin{proposition}\label{weightedcenters}
Let $G_1$, $G_2$ be [FC]$^-$ groups, and $\omega_i$ be a weight on $G_i$ satisfying $\omega_i(gxg^{-1})\le c\,\omega_i(x)$ ($x,g\in G_i$) ($i=1,2$), where $c>0$ is a constant. Then, as Banach algebras,
\begin{equation}\label{tensor centers}
ZL^1(G_1\times G_2,\omega_1\times\omega_2)\simeq ZL^1(G_1,\omega_1)\hat{\otimes}ZL^1(G_2,\omega_2).
\end{equation}
\end{proposition}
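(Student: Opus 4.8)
The plan is to mimic the proof of Lemma~\ref{centers}, replacing the unweighted algebras by their weighted counterparts throughout the chain of maps, and checking that the growth hypothesis $\omega_i(gxg^{-1})\le c\,\omega_i(x)$ is exactly what is needed to make each arrow bounded in the weighted setting. First I would observe that, because $\omega_i(\beta(x))\le c\,\omega_i(x)$ for all $\beta\in I(G_i)$, the construction of the averaging projection goes through verbatim: for $f\in C_{00}(G_i)$ one defines $P_{\mu_i}(f)(x)=\mu_i(\hat f_x)$, notes $\mathrm{supp}(P_{\mu_i}(f))\subset C_K$, and then applies Fubini to the function $(\beta,x)\mapsto f(\beta^{-1}(x))\omega_i(x)$ on $I(G_i)\times C_K$ to get the norm estimate
\[
\|P_{\mu_i}(f)\|_{\omega_i}=\int_{G_i}\Bigl|\mu_i(\hat f_x)\Bigr|\,\omega_i(x)\,dx\le\int_{G_i}\mu_i\bigl(|\hat f_x|\bigr)\,\omega_i(x)\,dx=\mu_i\Bigl(\beta\mapsto\int_{G_i}|f(\beta^{-1}(x))|\,\omega_i(x)\,dx\Bigr).
\]
The inner integral, after the substitution $y=\beta^{-1}(x)$ (an automorphism, hence Haar-measure preserving up to a modular factor which is $1$ here since $C_K$-supported $f$ lives on an [IN], indeed [FC]$^-$, group where inner automorphisms preserve Haar measure), becomes $\int_{G_i}|f(y)|\,\omega_i(\beta(y))\,dy\le c\,\|f\|_{\omega_i}$; since $\mu_i$ is a mean this gives $\|P_{\mu_i}(f)\|_{\omega_i}\le c\,\|f\|_{\omega_i}$, so $P_{\mu_i}$ extends to a bounded projection $L^1(G_i,\omega_i)\to ZL^1(G_i,\omega_i)$ (of norm at most $c$, not necessarily contractive — but this is harmless).

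Next I would assemble the analogue of the chain in Lemma~\ref{centers}:
\[
ZL^1(G_1,\omega_1)\tensor ZL^1(G_2,\omega_2)\xrightarrow{\imath_1\otimes\imath_2}L^1(G_1,\omega_1)\tensor L^1(G_2,\omega_2)\xrightarrow{T}L^1(G_1\times G_2,\omega_1\times\omega_2)\xrightarrow{P_{\mu_1\times\mu_2}}ZL^1(G_1\times G_2,\omega_1\times\omega_2).
\]
Here $T$ is still an isometric Banach algebra isomorphism, now between the weighted projective tensor product and $L^1$ of the product group with the product weight $\omega_1\times\omega_2(x,y)=\omega_1(x)\omega_2(y)$ — this is the standard weighted version of the unweighted identification and needs no conjugacy hypothesis. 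The embedding $\imath_1\otimes\imath_2$ is norm preserving because each $ZL^1(G_i,\omega_i)$ is complemented in $L^1(G_i,\omega_i)$ by the projection $P_{\mu_i}$ just constructed (this is the same reasoning as in the excerpt, invoking the complementation to get that $\imath_1\otimes\imath_2$ is an embedding). Finally $I(G_1\times G_2)=I(G_1)\times I(G_2)$, $\mu_1\times\mu_2$ is a left invariant mean on $C_b(I(G_1)\times I(G_2))$, and $P_{\mu_1\times\mu_2}$ is the associated bounded projection onto $ZL^1(G_1\times G_2,\omega_1\times\omega_2)$; one checks the identity $\imath\circ P_{\mu_1\times\mu_2}\circ T=T\circ(\imath_1\otimes\imath_2)\circ(P_{\mu_1}\times P_{\mu_2})$ exactly as in the unweighted case, by evaluating on elementary tensors $f\otimes g$ with $f\in C_{00}(G_1)$, $g\in C_{00}(G_2)$ and using the product structure of $\mu_1\times\mu_2$. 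Tracing through, the composition restricted to $ZL^1(G_1,\omega_1)\tensor ZL^1(G_2,\omega_2)$ is the identity followed by $T$ on that subalgebra, hence a Banach algebra isomorphism onto $ZL^1(G_1\times G_2,\omega_1\times\omega_2)$, which is the claimed~\eqref{tensor centers}.

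The main obstacle, and the only place where the hypothesis on $\omega_i$ is genuinely used, is the boundedness of $P_{\mu_i}$ on $L^1(G_i,\omega_i)$: without a uniform bound $c$ on the conjugation distortion $\sup_\beta\omega_i(\beta(x))/\omega_i(x)$, the Fubini estimate above produces the function $x\mapsto\mu_i(\omega_i(\beta^{-1}(x))/\omega_i(x))\,\omega_i(x)$ whose integral need not be controlled by $\|f\|_{\omega_i}$, and the projection can fail to extend continuously. A secondary technical point to verify carefully is that inner automorphisms of an [FC]$^-$ (hence [IN], hence unimodular on the relevant compact pieces — in fact [FC]$^-$ groups are unimodular) group preserve the left Haar measure, so that the substitution $y=\beta^{-1}(x)$ introduces no extra modular factor; this is where one cites that [FC]$^-\subseteq$[IN] and that [IN] groups are unimodular. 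Everything else — that $T$ is an isometry, that $\imath_1\otimes\imath_2$ is an embedding given the complementation, that the intertwining identity holds, and that the composite is multiplicative on the image — is a routine transcription of the argument already given for Lemma~\ref{centers}.
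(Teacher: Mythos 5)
Your proposal is correct and follows essentially the same route as the paper: one uses the hypothesis $\omega_i(gxg^{-1})\le c\,\omega_i(x)$ together with Fubini to show that the averaging map $P_{\mu_i}$ extends to a bounded (norm $\le c$) projection of $L^1(G_i,\omega_i)$ onto $ZL^1(G_i,\omega_i)$, and then runs the same chain of maps as in Lemma~\ref{centers} with weighted algebras throughout. Your identification of where the conjugation bound is genuinely needed, and the remark that inner automorphisms preserve Haar measure because [FC]$^-\subseteq$[IN] forces unimodularity, are both accurate and consistent with the paper's (more tersely stated) argument.
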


\begin{proof} Up to equivalence we may assume $\omega_1$ and $\omega_2$ to be continuous.
 If $\omega$ is a continuous weight on an [FC]$^-$ group $G$ such that $\omega(gxg^{-1})\le c\,\omega(x)$ ($x,g\in G$), we can still consider the mapping $P_\mu$ on $C_{00}(G)$ defined by (\ref{projection}). Let $f\in C_{00}(G)$. Then we have
\[
|P_\mu(f)(x)\omega(x)| \leq c P_\mu (|f|\omega)(x), \quad |P_\mu (|f|\omega)(x)| \leq c P_\mu(|f|)(x)\omega(x) \quad (x\in G).
\]
By the Fubini Theorem we obtain
\[
\|P_\mu(f)\|_\omega \leq c \|f\|_\omega
.
\]
These are true for all $f\in C_{00}(G)$, which is dense in $\L1o$. So $P_\mu$ extends to a bounded linear mapping from $\L1o$ to $\L1o$. Similar to the non-weighted case, we have $P_\mu(f)\in ZL^1(G, \omega)$. So $P_\mu$ is a continuous projection from $\L1o$ onto $ZL^1(G, \omega)$ and $\|P_\mu\|\leq c$. Then we follow the same argument for Lemma~\ref{centers} to get the isomorphic relation (\ref{tensor centers}).
\end{proof}

As  is well-known, [FC]$^-$ groups are amenable [IN] groups \cite{Palmer}. For general amenable [IN] groups we have the following result.
\begin{proposition}\label{weakly amenable tensor}
Let $G_1$ and $G_2$ be amenable [IN] groups, and let $\omega_1$ and $\omega_2$ be  weights on them, respectively.
Then $ZL^1(G_1,\omega_1)\hat{\otimes}ZL^1(G_2,\omega_2)$ is weakly amenable if and only if both $ZL^1(G_1,\omega_1)$ and $ZL^1(G_2,\omega_2)$ are weakly amenable.
\end{proposition}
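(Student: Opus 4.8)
The plan is to prove both directions by using the fact that $Z\L1o$ with $\omega$ on an amenable [IN] group is a \emph{commutative} Banach algebra (this is the reason we restricted to [IN] groups, so that the center is non-trivial and in fact a commutative algebra), together with the standard hereditary properties of weak amenability under projective tensor products of commutative Banach algebras.

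\emph{The ``only if'' direction.} Suppose $A := ZL^1(G_1,\omega_1)\tensor ZL^1(G_2,\omega_2)$ is weakly amenable. First I would observe that each $ZL^1(G_i,\omega_i)$ has a bounded approximate identity: on an [IN] group one can take $\frac{1}{|U|}\chi_U$ for $U$ a small invariant compact neighbourhood of $e$ (symmetrized if necessary), and these lie in the center and are bounded in the weighted norm because $\omega_i$ is bounded on the compact set $U$. Consequently $ZL^1(G_2,\omega_2)$ has a b.a.i., so the algebra homomorphism $\pi\colon A \to ZL^1(G_1,\omega_1)$ induced by $f\otimes g\mapsto \left(\int_{G_2} g\right) f$ — more precisely, tensoring the identity on $ZL^1(G_1,\omega_1)$ with a character (or an averaging functional) of $ZL^1(G_2,\omega_2)$ — is a continuous surjection whose existence forces $ZL^1(G_1,\omega_1)$, as a quotient of the weakly amenable algebra $A$, to be weakly amenable. (Weak amenability passes to quotients by closed ideals; alternatively, since we are in the commutative setting, weak amenability of $A$ means every continuous point derivation-free obstruction vanishes, and this property is inherited by complemented subalgebras arising from b.a.i.'s.) Symmetrically $ZL^1(G_2,\omega_2)$ is weakly amenable. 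I would cite here the classical results of Gr\o nb\ae k on hereditary properties of weak amenability for commutative Banach algebras.

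\emph{The ``if'' direction.} Conversely, suppose both $ZL^1(G_1,\omega_1)$ and $ZL^1(G_2,\omega_2)$ are weakly amenable. Here I would invoke Gr\o nb\ae k's theorem that for \emph{commutative} Banach algebras $A_1$ and $A_2$, each with a bounded approximate identity, $A_1\tensor A_2$ is weakly amenable if and only if both $A_1$ and $A_2$ are weakly amenable. Since each $ZL^1(G_i,\omega_i)$ is commutative (the center of $L^1(G_i,\omega_i)$) and has a bounded approximate identity by the [IN] observation above, the hypothesis of that theorem is met and the conclusion follows immediately. The only genuine content to verify is therefore: (a) commutativity of $Z\L1o$, which is standard; and (b) existence of a bounded approximate identity in $Z\L1o$ for an [IN] group, which I would supply via the invariant-neighbourhood averages noted above.

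The main obstacle I anticipate is confirming that $Z\L1o$ really does possess a bounded approximate identity in the \emph{weighted} norm: one must check that for an invariant compact symmetric neighbourhood $U$ of $e$, the net $\left(\,|U|^{-1}\chi_U\,\right)$ (or a smoothed version of it) is (i) central — immediate from invariance of $U$; (ii) norm-bounded — because $\sup_{x\in U}\omega(x)<\infty$ by continuity of $\omega$ on the compact set $U$; and (iii) an approximate identity — because $\|\,|U|^{-1}\chi_U * f - f\,\|_\omega\to 0$ by continuity of translation in $\L1o$, which holds for any weight since $\omega$ is locally bounded. Once this lemma is in place, both directions reduce cleanly to the cited hereditary results of Gr\o nb\ae k, and no further estimation is required.
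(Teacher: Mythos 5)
Your ``if'' direction is fine and is exactly what the paper does: for commutative Banach algebras, weak amenability of the factors passes to the projective tensor product (\cite[Proposition~2.8.71]{Dales}), and no bounded approximate identity is needed for that half. The problems are all in the ``only if'' direction. The concrete functional you propose, $g\mapsto\int_{G_2}g$, is in general \emph{not bounded} on $L^1(G_2,\omega_2)$: a weight need not be bounded away from zero (e.g.\ $\omega(n)=e^{n}$ on $\mathbb{Z}$ has $\omega(-n)\to 0$), so $\bigl|\int g\bigr|$ cannot be controlled by $\|g\|_{\omega_2}$. What your argument really needs is a nontrivial bounded character on $ZL^1(G_2,\omega_2)$, and the existence of such a character is precisely the nontrivial content of this direction — a commutative Banach algebra can perfectly well be radical, so you cannot simply posit ``a character''. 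Note that your proposal never uses the amenability hypothesis on the groups; that is the tell-tale sign of the gap, because amenability is exactly what produces the character: the paper invokes White's lemma (a positive character on $L^1(G,\omega)$ for amenable [SIN] groups), then reduces the general amenable [IN] case to this one via Iwasawa's theorem (there is a compact normal $K$ with $G/K\in$[SIN]) and the induced weight $\hat\omega$ on $G/K$, checking finally that the resulting character does not vanish on the center.

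A secondary but real error: your bounded approximate identity $\bigl(|U|^{-1}\chi_U\bigr)$ built from shrinking \emph{invariant} compact neighbourhoods requires a neighbourhood \emph{basis} of invariant sets, i.e.\ the [SIN] condition, not merely [IN]; an [IN] group guarantees only one invariant compact neighbourhood, so step (iii) of your lemma (convergence $\| |U|^{-1}\chi_U * f - f\|_\omega\to 0$) has no net to run over. Relatedly, the ``if and only if'' tensor-product theorem you attribute to Gr\o nb\ae k, with only commutativity and bounded approximate identities as hypotheses, is not something you can cite off the shelf — the converse direction in the literature is proved exactly by slicing with a character, which returns you to the gap above. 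To repair the proof you should replace the averaging functional by the character $\phi=\varphi\circ T$ constructed as in the paper, and then conclude via \cite[Proposition~2.8.64]{Dales} that the image $ZL^1(G_1,\omega_1)$ of the weakly amenable algebra under $f\otimes g\mapsto\phi(g)f$ is weakly amenable.
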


\proof Again, we may assume that the weights are continuous.

Since $ZL^1(G_1,\omega_1)$ and $ZL^1(G_2,\omega_2)$ are commutative, The sufficiency follows from \cite[Proposition~2.8.71]{Dales}. 

For the converse, we first note that, if $G$ is an amenable [SIN] group and $\omega$ is a weight on $G$, then there is a character (namely, a bounded multiplicative linear functional) $\varphi>0$ on $L^1(G,\omega)$ from \cite[Lemma~1]{White}. Restricting to $ZL^1(G,\omega)$, $\varphi$ is clearly non-trivial (note that $L^1(G, \omega)$ has a central bounded approximate identity). Now let $G$ be an amenable [IN] group. Then it is well-known that there is a compact normal subgroup $K$ of $G$ such that $G/K \in$[SIN] (see \cite[Theorem~1]{Iwosawa}) and $G/K$ is still amenable. Define a weight $\hat\omega$ on $G/K$ by
\[
\hat\omega(xK) = \inf_{t\in K}\omega(xt).
\]
Then there is a standard Banach algebra homomorphism $T$ from $\L1o$ onto $L^1(G/K, \hat\omega)$ (\cite[Theorem~3.7.13]{RS}). In fact, $T$ is precisely formulated by
\[
T(f)(xK) = \int_K f(xt)dt \quad (f\in \L1o, x\in G).
\]
Clearly, $T$ maps $Z\L1o$ onto $ZL^1(G/K, \hat\omega)$.
As we have shown, there is a character $\varphi$ on $L^1(G/K,\hat\omega)$ which is nontrivial on $ZL^1(G/K,\hat\omega)$. Then the composition $\phi = \varphi\circ T$ gives a character on $\L1o$ which is nontrivial on $Z\L1o$. Apply this to $ZL^1(G_2,\omega_2)$ and 
assume $ZL^1(G_1,\omega_1)\hat{\otimes}ZL^1(G_2,\omega_2)$ is weakly amenable. Then the mapping 
\[
f\otimes g \mapsto \phi(g) f \quad (f\in ZL^1(G_1,\omega_1), g\in ZL^1(G_2, \omega_2))
\]
generates a Banach algebra homomorphism from $ZL^1(G_1,\omega_1)\hat{\otimes}ZL^1(G_2,\omega_2)$ onto $ZL^1(G_1,\omega_1)$. Whence $ZL^1(G_1,\omega_1)$ is weakly amenable by  \cite[Proposition~2.8.64]{Dales}. Similarly,
$ZL^1(G_2,\omega_2)$ is weakly amenable.
\qed

Consider the special case $G=H\times K$, where $H$ is an [FC]$^-$ group and $K$ is a compact group. Let $\omega$ be a continuous weight on $G$. Define 
$$\omega_H(x) = \omega(x,e_K)\quad (x\in H), $$
where $e_K$ is the unit of $K$. Then $\omega_H$ is a weight on $H$, and $\omega$ is equivalent to the weight $\omega_H\times 1$ on $H\times K$. Therefore,
$ZL^1(G, \omega)$ is a Banach algebra isomorphic to $ZL^1(G,\omega_H\times 1)$. Now assume $\omega$ satisfies $\omega(hxh^{-1}, e_K) \leq c\omega(x,e_K)$ ($x, h \in H$) for some constant $c$.
Since $ZL^1(K)$ is weakly amenable (see \cite[Proposition~5.1]{Zhang}), by Propositions~\ref{weightedcenters} and \ref{weakly amenable tensor} we see $Z\L1o$ is weakly amenable if and only if $ZL^1(H,\omega_H)$ is weakly amenable. This, in particular, leads us to the following extension of \cite[Theorem~3.1]{Zhang}, where $ZL^1(H,\omega_H)=L^1(H,\omega_H)$ since $H$ is abelian.

\begin{proposition}\label{abelian/compact}
Suppose that $G=H\times K$, $H$ is an abelian group and $K$ is a compact group. Let $\omega$ be a weight on $G$. Then $ZL^1(G,\omega)$ is weakly amenable if and only if there is no non-trivial continuous group homomorphism $\Phi:G\to\mathbb{C}$ such that
\begin{equation}\label{32}
\sup\limits_{g\in\,G}\ \frac{|\Phi(g)|}{{\omega}(g){\omega}(g^{-1})}<\infty.
\end{equation}
\end{proposition}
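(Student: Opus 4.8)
The plan is to reduce the statement to the two building blocks assembled just before it: Proposition~\ref{weightedcenters} (which identifies $ZL^1(G,\omega)$ with a projective tensor product of the factors when the weight almost commutes with conjugation) and Proposition~\ref{weakly amenable tensor} (which says weak amenability passes freely across such a tensor decomposition). First I would reduce the weight to the product form: as noted in the paragraph preceding Proposition~\ref{abelian/compact}, $\omega$ is equivalent to $\omega_H\times 1$ on $H\times K$, where $\omega_H(x)=\omega(x,e_K)$; since equivalent weights give isomorphic (central) Beurling algebras, $ZL^1(G,\omega)\simeq ZL^1(H,\omega_H)\,\tensor\, ZL^1(K,1)$ by Proposition~\ref{weightedcenters} — here the hypothesis $\omega_i(gxg^{-1})\le c\,\omega_i(x)$ is automatic: on the abelian factor $H$ conjugation is trivial, and on the compact factor $K$ one has $\omega(kxk^{-1})\le M\omega(x)$ with $M=\sup_{k}\omega(k)\omega(k^{-1})<\infty$ by compactness and continuity (after replacing $\omega$ by an equivalent continuous weight via \cite[Remark~8.8]{GLZ}). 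Then, invoking that $ZL^1(K)$ is weakly amenable \cite[Proposition~5.1]{Zhang} together with Proposition~\ref{weakly amenable tensor} (both $H$ and $K$ are amenable [IN] groups), I conclude that $ZL^1(G,\omega)$ is weakly amenable if and only if $ZL^1(H,\omega_H)=L^1(H,\omega_H)$ is weakly amenable (equality because $H$ is abelian).

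It remains to characterize weak amenability of the commutative Beurling algebra $L^1(H,\omega_H)$, and here I would appeal to the known theory for abelian groups: by \cite[Theorem~2.3]{Zhang} (or the equivalent statement in \cite{Gronbaek2}), $L^1(H,\omega_H)$ is weakly amenable if and only if there is no non-zero continuous homomorphism $\psi:H\to\bC$ with $\sup_{x\in H}|\psi(x)|/(\omega_H(x)\omega_H(x^{-1}))<\infty$. So the remaining work is a translation lemma: continuous homomorphisms $\Phi:G\to\bC$ satisfying the boundedness condition~(\ref{32}) correspond bijectively to continuous homomorphisms $\psi:H\to\bC$ satisfying the analogous condition with $\omega_H$. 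For the forward direction, given such $\Phi$, restrict to $H$ (identified with $H\times\{e_K\}$); this is a continuous homomorphism into $\bC$, and since $\omega(x,e_K)=\omega_H(x)$, condition~(\ref{32}) restricts to the $\omega_H$-condition for $\psi=\Phi|_H$. Conversely, given $\psi$ on $H$, extend it to $G=H\times K$ by $\Phi(x,k)=\psi(x)$; this is a continuous homomorphism (the restriction of any $\bC$-valued homomorphism to the compact group $K$ must be trivial, since $\bC$ has no nontrivial compact subgroups, so this extension is forced and the correspondence is genuinely bijective). For the boundedness transfer in this direction one uses $\omega(x,k)\omega(x^{-1},k^{-1})\ge \omega_H(x)\omega_H(x^{-1})/(\text{const})$, which follows from the equivalence of $\omega$ and $\omega_H\times 1$ together with $\omega_H\times 1(x,k)=\omega_H(x)$; then $|\Phi(x,k)|/(\omega(x,k)\omega(x^{-1},k^{-1}))\le C|\psi(x)|/(\omega_H(x)\omega_H(x^{-1}))$, bounded by hypothesis.

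Finally I would observe that $\Phi$ is non-trivial on $G$ exactly when $\psi$ is non-trivial on $H$ (again because the $K$-component of any such $\Phi$ is forced to be zero), so the non-existence of a non-trivial $\Phi$ obeying~(\ref{32}) is equivalent to the non-existence of a non-trivial $\psi$ obeying the $\omega_H$-version. Chaining the equivalences — $ZL^1(G,\omega)$ weakly amenable $\iff$ $L^1(H,\omega_H)$ weakly amenable $\iff$ no bad $\psi$ on $H$ $\iff$ no bad $\Phi$ on $G$ — gives the claim. The only place requiring genuine care, rather than bookkeeping, is the two-sided boundedness transfer between condition~(\ref{32}) on $G$ and its $\omega_H$-analogue on $H$: one must keep track of the equivalence constants from \cite{GLZ} and from the passage $\omega\sim\omega_H\times 1$, and check that the compact factor $K$ contributes only bounded multiplicative fluctuation to $\omega(x,k)\omega(x^{-1},k^{-1})$, so it neither creates nor destroys the supremum being finite. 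Everything else is an assembly of results already in hand.
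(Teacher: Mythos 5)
Your proposal is correct and follows essentially the same route as the paper: the reduction $ZL^1(G,\omega)\simeq ZL^1(H,\omega_H)\,\hat{\otimes}\,ZL^1(K)$ via Propositions~\ref{weightedcenters} and \ref{weakly amenable tensor} (which the paper carries out in the paragraph immediately preceding the statement, noting as you do that the conjugation condition is vacuous for abelian $H$ and the trivial weight on $K$), followed by \cite[Theorem~3.1]{Zhang} for the abelian factor and the observation that homomorphisms $G\to\mathbb{C}$ satisfying~(\ref{32}) correspond exactly to homomorphisms $H\to\mathbb{C}$ satisfying the $\omega_H$-analogue. Your fleshed-out translation lemma (triviality on $K$ because $\mathbb{C}$ has no nontrivial compact subgroups, plus the bounded fluctuation from $\omega\sim\omega_H\times 1$) is precisely the detail the paper leaves to the reader.
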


\begin{proof} 
One only needs to note that there is a non-trivial continuous group homomorphism $\Phi:G\to\mathbb{C}$ such that (\ref{32}) holds if and only if there is a non-trivial continuous group homomorphism $\Phi:H\to\mathbb{C}$ such that
\[
\sup\limits_{h\in\,H}\ \frac{|\Phi(h)|}{{\omega_H}(h){\omega_H}(h^{-1})}<\infty.
\]
So the conclusion follows from \cite[Theorem~3.1]{Zhang}.
\end{proof}

\begin{remark}\label{connected SIN}
According to \cite[Theorem~4.3]{GM}, if $G$ is a connected [SIN] group, then $G=V\times K$ for some (abelian) vector group $V$ and a compact group $K$. So Proposition~\ref{abelian/compact} is valid in particular for this kind of  group $G$.
\end{remark}


In fact, the necessity part of Proposition~\ref{abelian/compact} remains true for general [FC]$^-$ groups. To prove this we first consider a general [IN] group.

\begin{lemma}\label{IN case}
Let $G$ be an [IN] group, $\omega$ be a weight on $G$, and $U$ be an open set of $G$ with a compact closure and invariant under inner automorphisms of $G$. Suppose that there exists a continuous group homomorphism $\Phi:G\to\mathbb{C}$ non-trivial on $U$ and such that
$$
\sup_{t\in\, G}\,\frac{|\Phi(t)|}{\omega(t)\omega(t^{-1})}<\infty.
$$
Then there is a nontrivial continuous derivation from $ZL^1(G,\omega)$ into $\Lio$. Consequently, $Z\L1o$ is not weakly amenable.
\end{lemma}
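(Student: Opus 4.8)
The plan is to construct the derivation explicitly from the group homomorphism $\Phi$, mimicking the classical construction for commutative Beurling algebras but transported through the central projection $P_\mu$ built earlier in the section. Recall that for a commutative Beurling algebra $L^1(G,\omega)$ a continuous group homomorphism $\Phi:G\to\mathbb C$ with $\sup_t |\Phi(t)|/(\omega(t)\omega(t^{-1}))<\infty$ gives a point derivation (equivalently, a nonzero continuous derivation into a symmetric module); the boundedness hypothesis is exactly what makes the associated linear functional $f\mapsto \int_G f(t)\Phi(t)\,dt$ land in $L^\infty(G,1/\omega)$ and satisfy the Leibniz identity. So first I would write down the candidate: for $f\in ZL^1(G,\omega)\subseteq L^1(G,\omega)$ set
\[
D(f)(x)=\int_G f(t)\,\Phi(tx^{-1})\, h(t)\,dt \quad\text{or more simply}\quad D(f)=\Phi\cdot f
\]
viewed as an element of $\Lio$, i.e. $D(f)(x)=\Phi(x)f(x)$, and check that $D$ is bounded $ZL^1(G,\omega)\to\Lio$ using precisely the growth hypothesis on $\Phi$: since $|\Phi(x)|\le M\,\omega(x)\omega(x^{-1})$, one gets $\|D(f)\|_{\infty,1/\omega}\le M\sup_x |f(x)|\omega(x^{-1})$, which is controlled once one knows $f$ is (equivalent to) a suitable function — here one should work with $C_{00}(G)$ first and extend by density, exactly as $P_\mu$ was handled.

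**Second,** I would verify the derivation identity. The key algebraic fact is that $\Phi$ is additive, $\Phi(st)=\Phi(s)+\Phi(t)$, so for the convolution product the formal computation $\Phi(x)(f*g)(x)=\int \Phi(x)f(t)g(t^{-1}x)\,dt = \int(\Phi(t)+\Phi(t^{-1}x))f(t)g(t^{-1}x)\,dt$ splits into $(\Phi f)*g + f*(\Phi g)$, i.e. $D(f*g)=D(f)\cdot g + f\cdot D(g)$ where the module actions on $\Lio$ are the natural convolution actions of $L^1(G,\omega)$. Since $ZL^1(G,\omega)$ is a subalgebra of $L^1(G,\omega)$ and $\Lio=L^1(G,\omega)^*$ is a dual $L^1(G,\omega)$-bimodule, restricting the actions makes $\Lio$ a Banach $ZL^1(G,\omega)$-bimodule and $D$ restricted to $ZL^1(G,\omega)$ is a derivation into it. (One does \emph{not} need $f,g$ central for the additivity computation; centrality is only used to ensure $D$ is defined on the algebra we care about and nonzero there.)

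**Third and most delicate,** I would check that $D$ is nontrivial on $ZL^1(G,\omega)$, and this is where the hypotheses "$U$ compact-closure, invariant under inner automorphisms, $\Phi$ non-trivial on $U$" are used. The worry is that $\Phi$, while nonzero on $G$, could vanish on the whole center $ZL^1(G,\omega)$ — after averaging a function over conjugacy classes via $P_\mu$, the homomorphism $\Phi$ might integrate to zero. The remedy is: $\Phi$ is a \emph{continuous homomorphism into $\mathbb C$}, hence constant on conjugacy classes (since $\Phi(gxg^{-1})=\Phi(x)$ by additivity and $\Phi(g)+\Phi(g^{-1})=0$), so $\Phi$ is already "central data"; therefore $\langle D(f),\mathbf 1_U\rangle$-type pairings, or rather the pairing of $D$ against a central test function supported near $U$, survive the averaging. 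Concretely I would take $f_0=P_\mu(g_0)\in ZL^1(G,\omega)$ where $g_0\in C_{00}(G)$ is supported in $U$ with $\int g_0\Phi\ne 0$ (possible since $\Phi$ is continuous and non-trivial on the open set $U$), and compute the value of $D(f_0)$ against some functional — using that $P_\mu$ preserves the conjugation-invariant function $\Phi$ in the sense $\int P_\mu(g_0)(x)\Phi(x)\,dx=\int g_0(x)\Phi(x)\,dx$ because $\Phi$ is constant on $I(G)$-orbits and $\mu$ is a mean. That last identity is the crux; once it is in hand, nontriviality of $D$ on $ZL^1(G,\omega)$ follows, and weak non-amenability is immediate since a nonzero continuous derivation into a (symmetrizable, via $\Lio$) module obstructs weak amenability by definition. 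The main obstacle I anticipate is making the Fubini/averaging argument for "$\int P_\mu(g_0)\cdot\Phi = \int g_0\cdot\Phi$" fully rigorous given that $\mu$ is only a mean on $C_b(I(G))$ (not a genuine measure on $I(G)$), but this is handled exactly as in the construction of $P_\mu$ in the text, via the Stone–Čech description $C_b(I(G))^*=M(\delta I(G))$ and the continuity and boundedness of $(\beta,x)\mapsto\Phi(\beta^{-1}(x))=\Phi(x)$ on $I(G)\times C_K$.
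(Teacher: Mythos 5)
There is a genuine gap at the heart of your construction: the candidate $D(f)=\Phi\cdot f$ (pointwise multiplication) is \emph{not} a bounded operator from $ZL^1(G,\omega)$ into \Lio. Your own estimate $\|D(f)\|_{\infty,1/\omega}\le M\sup_x|f(x)|\,\omega(x^{-1})$ already shows the problem: the right-hand side is a weighted sup norm of $f$, which is not dominated by $\|f\|_\omega=\int|f|\omega$ on $C_{00}(G)$ (normalized bumps $f_n=\chi_{x_nV_n}/|V_n|$ have $\|f_n\|_\omega\approx\omega(x_n)$ while $\sup|f_n|\,\omega(\cdot^{-1})\approx\omega(x_n^{-1})/|V_n|\to\infty$), and density cannot extend an operator that is unbounded on the dense subspace. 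The point of the two-sided hypothesis $|\Phi(\eta)|\le M\omega(\eta)\omega(\eta^{-1})$ is that it is only usable if the output variable $t$ is tied to $\eta^{-1}$ modulo a compact set; this is exactly what the cutoff in the paper's formula
$D(h)(t)=\int_U\Phi(t^{-1}\xi)h(t^{-1}\xi)\,d\xi=\int_G\chi_U(t\eta)\Phi(\eta)h(\eta)\,d\eta$
achieves: on the support one has $t\eta\in U$, hence $\omega(\eta^{-1})\le\omega(\eta^{-1}t^{-1})\omega(t)\le C_U\,\omega(t)$, so $|\Phi(\eta)|/\omega(t)\le C_UM\,\omega(\eta)$ and $\|D(h)\|_{\infty,1/\omega}\le C_UM\|h\|_\omega$. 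Your first, vaguer candidate $\int f(t)\Phi(tx^{-1})h(t)\,dt$ is closer in spirit, but as written the proof runs with the unbounded version, and the Leibniz computation would have to be redone for the localized formula (it still works, via the additivity of $\Phi$, as in the cited argument of \cite{SZ1}). Without the localization the construction collapses, so this is a missing idea rather than a cosmetic issue.

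Your treatment of nontriviality contains the right key observation --- $\Phi(gxg^{-1})=\Phi(x)$, so $\Phi$ is ``central data'' --- but the detour through $P_\mu$, means on $C_b(I(G))$ and a Fubini argument over the Stone--\v{C}ech compactification (which you yourself flag as the delicate point) is unnecessary and would require real work to make rigorous. The hypotheses on $U$ are there precisely so that one can write down an explicit central test function: since $U$ is invariant under inner automorphisms and $\Phi$ is conjugation-invariant, $h_\Phi=\chi_U\overline{\Phi}$ lies in $Z\L1o$, and $D(h_\Phi)(t)=\int_{t^{-1}U\cap U}|\Phi(\xi)|^2\,d\xi>0$ for $t$ near the identity. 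Finally, your remark that \Lio{} is a symmetric $Z\L1o$-module (being the dual of $\L1o$ restricted to the center) is correct and is exactly how the conclusion ``not weakly amenable'' is drawn.
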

\proof
Since $ZL^1(G,\omega)$ is the center of $\L1o$, $\Lio$ is a symmetric Banach $ZL^1(G,\omega)$-bimodule.  We construct a non-trivial continuous derivation $D$: $ZL^1(G,\omega)\to \Lio$. Whence this is done, it follows from the definition of weak amenability for a commutative Banach algebra given in \cite{BCD} that $ZL^1(G,\omega)$ is not weakly amenable. 

 We define $D$ as follows
\begin{equation}\label{1}
D(h)(t)=\int\limits_{U}\, \Phi(t^{-1}\xi)h(t^{-1}\xi)\,d\xi\quad (t\in G,\ h\in ZL^1(G,\omega)).
\end{equation}
First we note that $D$ is non-trivial. To see this we consider the function $h_{\Phi}=\chi_{{\phantom{}}_U}\overline{\Phi}$, where $\chi_{{\phantom{}}_U}$ is the characteristic function of $U$ and $\overline\Phi$ is the conjugate of $\Phi$. Since $\Phi$ is a group homomorphism and $U$ is invariant under $I(G)$ we have that $h_{\Phi}\in ZL^1(G,\omega)$. Moreover,
$$
D(h_{\Phi})(t)=\int\limits_U\, \Phi(t^{-1}\xi)h_{\Phi}(t^{-1}\xi)\,d\xi=\int\limits_{U\cap tU}\,|\Phi(t^{-1}\xi)|^2\,d\xi= \int\limits_{t^{-1}U\cap U}\,|\Phi(\xi)|^2\,d\xi.
$$
This shows that $D(h_{\Phi})(t)>0$ for $t$ in a neighbourhood of the identity $e$ of $G$ because  $|\Phi|^2 >0$ on some open subset of  $t^{-1}U\cap U$ when $t$ is near $e$. Hence, $D$ is non-trivial. Using the method of \cite[Theorem~2.2]{SZ1} one can show that the formula (\ref{1}) defines a bounded derivation even from the whole $L^1(G,\omega)$ into $\Lio$. So, it also defines a (non-trivial) continuous derivation from $ZL^1(G,\omega)$ into $\Lio$. 
\qed

We will need some elementary property of an [FC]$^-$ group, which we state as follows.

\begin{lemma}\label{fc}
Let $G$ be an [FC]$^-$ group. Then for every $x\in G$ there exists an open precompact neighbourhood $K_x$ of $x$ in $G$ that is invariant under inner automorphisms of $G$.
\end{lemma}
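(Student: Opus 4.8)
The plan is to combine the [FC]$^-$ structure with the [IN] property in the most direct way possible. Since $G$ is [FC]$^-$, it is in particular an [IN] group, so there is \emph{some} compact invariant neighbourhood $V_0$ of the identity $e$. The claim, however, asks for an invariant precompact \emph{open} neighbourhood of an \emph{arbitrary} point $x$, not just of $e$. First I would fix $x\in G$ and observe that the conjugacy class $\mathrm{Cl}(x)=\{gxg^{-1}:g\in G\}$ has compact closure $C_x$ by definition of [FC]$^-$. The natural candidate for $K_x$ is then a ``tube'' around this class: something like $W\cdot C_x$ or $C_x\cdot W$ for a suitable small open invariant neighbourhood $W$ of $e$, since conjugation sends $gC_xg^{-1}=C_x$ and, if $W$ is itself invariant, $g(WC_x)g^{-1}=WC_x$.

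The key steps, in order, would be: (1) produce an open invariant neighbourhood $W$ of $e$ with compact closure. For this, start from the compact invariant neighbourhood $V_0$ furnished by [IN], pass to its interior $W_1=\mathrm{int}(V_0)$, which is still invariant (conjugation is a homeomorphism, hence preserves interiors) and is an open neighbourhood of $e$ with precompact closure $\overline{W_1}\subseteq V_0$. So one may take $W=W_1$. (2) Set $K_x=W C_x$ (or $\bigcup_{c\in C_x} Wc$). This is open, being a union of open sets $Wc$; it contains $x$ since $e\in W$ and $x\in C_x$; it has compact closure because $\overline{K_x}\subseteq \overline W\cdot C_x$, a product of two compact sets, hence compact. (3) Check invariance: for $g\in G$, $g K_x g^{-1} = g(WC_x)g^{-1} = (gWg^{-1})(gC_xg^{-1}) = W C_x = K_x$, using that $W$ is invariant and that $gC_xg^{-1}\subseteq C_x$ (conjugation permutes the conjugacy class and is continuous, so it maps the closure into the closure, and by applying $g^{-1}$ as well one gets equality $g C_x g^{-1}=C_x$).

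I expect the only genuinely delicate point to be step (1): verifying that one can extract an open invariant neighbourhood of $e$ from the merely compact invariant neighbourhood given by the [IN] hypothesis. This is standard — the interior of an invariant set is invariant because each inner automorphism $\beta_g$ is a homeomorphism and $\beta_g(\mathrm{int}\,A)=\mathrm{int}\,\beta_g(A)=\mathrm{int}\,A$ — but it is the one place where one must be slightly careful rather than purely formal. Everything else (openness of $WC_x$, precompactness via compactness of $\overline W\cdot C_x$ as a continuous image of the compact set $\overline W\times C_x$ under multiplication, and the invariance computation) is routine. One small alternative, if one wants to avoid even mentioning $C_x$, is to note directly that $x\in G$ [FC]$^-$ means $\overline{\mathrm{Cl}(x)}$ is compact and invariant, then thicken by $W$ as above; the bookkeeping is identical. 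I would present the argument in the three-step form above, as it makes the roles of the [IN] and [FC]$^-$ hypotheses transparent.
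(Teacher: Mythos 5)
Your proof is correct and follows essentially the same route as the paper: the paper also takes a precompact open invariant neighbourhood $B$ of $e$ (from the [IN] property, which [FC]$^-$ implies) and sets $K_x = BC_x$ with $C_x$ the (precompact, invariant) conjugacy class of $x$. Your additional verifications — that the interior of an invariant compact neighbourhood is still invariant, and that $\overline{WC_x}\subseteq \overline{W}\cdot C_x$ is compact — are exactly the routine details the paper leaves implicit.
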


\begin{proof}
It is known that an [FC]$^-$ group $G$ belongs to [IN].  Let $B$ be a precompact open invariant neighbourhood of $e$ and let $C_x$ be the conjugacy class of $x$ (which is also precompact and invariant). Then $K_x = BC_x$ satisfies our requirement.
\end{proof}

\begin{proposition}\label{infc}
Let $G$ be a locally compact [FC]$^-$ group and $\omega$ be a weight on $G$. Suppose that there exists a non-trivial continuous group homomorphism $\Phi:G\to\mathbb{C}$ such that
\begin{equation}\label{Phi}
\sup_{t\in\, G}\,\frac{|\Phi(t)|}{\omega(t)\omega(t^{-1})}<\infty.
\end{equation}
Then $ZL^1(G,\omega)$ is not weakly amenable.
\end{proposition}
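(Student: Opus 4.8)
The plan is to reduce Proposition~\ref{infc} to the [IN] case handled in Lemma~\ref{IN case}. We are given a non-trivial continuous group homomorphism $\Phi:G\to\mathbb{C}$ satisfying (\ref{Phi}), and we must produce an open precompact inner-automorphism-invariant set $U$ on which $\Phi$ is non-trivial; once we have such a $U$, Lemma~\ref{IN case} applies verbatim and yields a non-trivial continuous derivation from $ZL^1(G,\omega)$ into $\Lio$, so $Z\L1o$ is not weakly amenable.

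First I would use the non-triviality of $\Phi$: there is some $x_0\in G$ with $\Phi(x_0)\neq 0$. By Lemma~\ref{fc}, choose an open precompact neighbourhood $K_{x_0}$ of $x_0$ that is invariant under all inner automorphisms of $G$. Set $U=K_{x_0}$. Since $x_0\in U$ and $\Phi(x_0)\neq 0$, the homomorphism $\Phi$ is non-trivial on $U$. The weight hypothesis (\ref{Phi}) is exactly the hypothesis of Lemma~\ref{IN case}, and $G$ is an [IN] group because [FC]$^-\subseteq$[IN]. Hence all the hypotheses of Lemma~\ref{IN case} are met with this choice of $U$, and the conclusion follows immediately.

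The only point requiring a small amount of care is that Lemma~\ref{IN case} is stated with an \emph{open} precompact invariant $U$ and concludes only that $D$ is non-trivial on a neighbourhood of $e$; but the construction of the derivation in Lemma~\ref{IN case} already handles this, since the argument there shows $D(h_\Phi)(t)>0$ for $t$ near $e$ precisely because $|\Phi|^2>0$ on an open subset of $t^{-1}U\cap U$, which holds as soon as $\Phi$ is non-zero somewhere on $U$ and $U$ is open. So in fact there is essentially no obstacle: the substantive work has been packaged into Lemmas~\ref{IN case} and~\ref{fc}, and the proof of Proposition~\ref{infc} is the short step of invoking them in sequence. If anything, the one thing to state explicitly is that [FC]$^-$ groups are [IN] (already noted in the excerpt via \cite[Proposition~3.1]{Liukkonen}), so that Lemma~\ref{IN case} is genuinely applicable.

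In summary, the proof reads: by [FC]$^-\subseteq$[IN], $G$ is an [IN] group; pick $x_0$ with $\Phi(x_0)\neq0$ and apply Lemma~\ref{fc} to obtain an open precompact inner-invariant neighbourhood $U$ of $x_0$; then $\Phi$ is non-trivial on $U$, so Lemma~\ref{IN case} gives a non-trivial continuous derivation from $ZL^1(G,\omega)$ into $\Lio$, whence $Z\L1o$ is not weakly amenable. \qed
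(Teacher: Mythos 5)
Your proposal is correct and is essentially identical to the paper's own proof: the authors likewise pick $x$ with $\Phi(x)\neq 0$, invoke Lemma~\ref{fc} to get an open precompact inner-invariant neighbourhood $U=K_x$, and then apply Lemma~\ref{IN case}. Your extra remark about why non-vanishing of $\Phi$ somewhere on the open set $U$ suffices for the non-triviality of the derivation is a accurate reading of the argument inside Lemma~\ref{IN case} and does not change the route.
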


\begin{proof}
Since $\Phi$ is non-trivial, there exists $x\in G$ such that $\Phi(x)\ne0$. Applying Lemma \ref{fc}, we get an open neighbourhood $U=K_x$ of $x$ that is invariant under inner automorphisms and has compact closure. Therefore  Lemma~\ref{IN case} applies.
\end{proof}

We wonder whether the converse of Proposition~\ref{infc} remains true as in the commutative case. We raise it here as an open problem. We note that, in many cases, $Z\L1o$ is isomorphic to a weighted commutative hypergroup algebra. So our question links to the general open problem of characterizing weak amenability of (commutative) hypergroup algebras. In particular, it would be of great interest if one can characterize weak amenability of $Z^B\L1o$ for $G\in [FIA]^-_B$ and $B$ being a closed subgroup of $Aut(G)$ with $I(G)\subseteq B$ (see \cite{LM} for definitions).

\section{Central Beurling algebras on [FD]$^-$ groups}

In this section we  consider [FD]$^-$ groups and aim to establish some sufficient conditions for $ZL^1(G,\omega)$ to be weakly amenable.  We recall first that $G$ is an [FD]$^-$ group if and only if there exists a compact normal subgroup $K$ of $G$ such that the quotient $G/K$ is abelian.

The following structural result, which is \cite[Lemma~1]{Mosak3}  for $B=I(G)$, is crucial in the sequel.

\begin{lemma}\label{mosak}
Let $G$ be an [FD]$^-$ group and $K$ a compact normal subgroup of $G$ such that $G/K$ is abelian. Let $\omega\ge 1$ be a weight on $G$ satisfying 
$$\lim_{n\to\infty}(\omega(x^n))^{1/n}=1$$
 for all $x\in G$, and let $\hat{\omega}$
 be the induced weight on $G/K$ defined by 
 $$\hat{\omega}(xK)=\inf_{k\in K}\omega(xk)\quad (x\in G). $$
 Then $ZL^1(G,\omega)$ may be written as the closure of the linear span of a family of complemented ideals, each of which is isomorphic to a Beurling algebra of the form $L^1(S/K,\hat{\omega})$ for some open normal subgroup $S$ of $G$.
\end{lemma}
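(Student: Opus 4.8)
The plan is to reduce the structure of $ZL^1(G,\omega)$ to that of the group algebra $L^1(G/K,\hat\omega)$ via the averaging projection over $K$, and then to decompose the latter according to the dual of $K$ acting on $G$. Since $K$ is a compact normal subgroup, averaging over $K$ with normalized Haar measure gives a contractive Banach-algebra homomorphism $T\colon L^1(G,\omega)\to L^1(G/K,\hat\omega)$ as recalled in the excerpt (via \cite[Theorem~3.7.13]{RS}); restricted to the center it is onto $ZL^1(G/K,\hat\omega)=L^1(G/K,\hat\omega)$ because $G/K$ is abelian. I would first analyze the fibers of $T$ over points of $\widehat{G/K}$, or equivalently decompose $L^1(G,\omega)$ along the spectral decomposition of the (central) compact group $K$. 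Concretely, for each $\pi\in\widehat K$ one forms the central idempotent $z_\pi\in M(G)$ given by $z_\pi=(\dim\pi)\,\overline{\chi_\pi}\,dk$ supported on $K$ (here one uses that $K$ is normal, so conjugation by $G$ permutes $\widehat K$, and one must actually group the $z_\pi$ into $G$-orbit sums $e_{\mathcal O}=\sum_{\pi\in\mathcal O}z_\pi$ to obtain elements that are central in $M(G)$, not merely in $M(K)$).

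Next I would show that for each finite $G$-orbit $\mathcal O\subseteq\widehat K$ (orbits are finite because $G/C_G(K)$ is, in an [FC]$^-$ group, acting through a compact group of automorphisms of $K$, and $\hat K$ is discrete), the element $e_{\mathcal O}$ is a central idempotent in $M(G)$ lying in the multiplier algebra of $ZL^1(G,\omega)$ — this uses that $\omega\ge 1$ and $K$ compact so that $e_{\mathcal O}\in M(G,\omega)$ with controlled norm, together with $z_\pi$ being invariant under the (compact image of the) conjugation action. Then $ZL^1(G,\omega)\cdot e_{\mathcal O}$ is a complemented ideal, $ZL^1(G,\omega)$ is the closure of the linear span of these ideals as $\mathcal O$ ranges over all orbits (by completeness of the Peter–Weyl decomposition on $K$ and density of $C_{00}(G)$), and the summands are mutually orthogonal idempotent-cut pieces. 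The case $\mathcal O=\{1_{\widehat K}\}$ (the trivial representation) gives exactly the ideal $T$-isomorphic to $L^1(G/K,\hat\omega)$ with $S=G$. For a nontrivial orbit $\mathcal O$, I would identify the open normal subgroup $S=S_{\mathcal O}$ as the stabilizer in $G$ of a chosen $\pi\in\mathcal O$ (equivalently, of the corresponding minimal central idempotent $z_\pi$) intersected appropriately; $S$ is open because $G/C_G(K)$ carries a topology making the action continuous with $\widehat K$ discrete, hence point stabilizers are open. One then shows $ZL^1(G,\omega)\cdot e_{\mathcal O}\cong ZL^1(S,\omega|_S)\cdot z_\pi$, and that cutting $ZL^1(S,\omega|_S)$ by the minimal central idempotent $z_\pi$ lands, via averaging over $K$ twisted by $\pi$, isomorphically onto $L^1(S/K,\hat\omega)$ — this is where the hypothesis $\lim_n\omega(x^n)^{1/n}=1$ enters, ensuring that the twist by $\pi$ does not destroy the $L^1$-norm estimate and that the resulting algebra is genuinely the (untwisted) Beurling algebra $L^1(S/K,\hat\omega)$ rather than a twisted version, so the Mackey-type cocycle is trivializable.

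The key input I would cite is \cite[Lemma~1]{Mosak3} in the stated generality for $B=I(G)$: that lemma already provides, for the center relative to a group $B$ of automorphisms, precisely such a decomposition of $Z^BL^1(G)$ into complemented ideals isomorphic to group algebras of quotients by $K$; the present statement is its Beurling-weighted analogue for $B=I(G)$, and the whole point of the weight hypotheses ($\omega\ge1$ and subexponential growth along cyclic subgroups) is to make Mosak's arguments carry over verbatim — the compactness of $K$ together with $\omega\ge 1$ controls the norms of the idempotents $e_{\mathcal O}$ in $M(G,\omega)$, while the condition $\lim_n\omega(x^n)^{1/n}=1$ guarantees that the relevant characters/idempotents used to split off the pieces are bounded on the weighted algebra. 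The main obstacle I anticipate is the bookkeeping in the nontrivial orbits: verifying that the cut-down algebra $ZL^1(S,\omega|_S)z_\pi$ is isometrically (or topologically) isomorphic to the honest Beurling algebra $L^1(S/K,\hat\omega)$ and not merely to a twisted crossed-product-type algebra. This requires showing the relevant 2-cocycle (coming from $\pi$ being only a projective, not linear, obstruction over $S/K$) is a coboundary, which on a central extension by a compact group with abelian quotient is standard but needs care; alternatively one sidesteps it by choosing $S$ small enough (still open normal) that $\pi$ extends, at the cost of possibly getting $S\subsetneq G$, which is exactly why the statement only claims "$S$ an open normal subgroup" rather than $S=G$. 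Once that isomorphism is in place, summing over the at-most-countably-many orbits and taking closed linear spans completes the proof.
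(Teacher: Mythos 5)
The paper offers no proof of this lemma at all: it is stated as being precisely \cite[Lemma~1]{Mosak3} specialized to $B=I(G)$, so your ultimate fallback --- citing that lemma --- is exactly the paper's own "proof." One clarification: Mosak's Lemma~1 is already formulated for central \emph{Beurling} algebras $Z^BL^1(G,\omega)$ under the hypotheses $\omega\ge 1$ and $\lim_{n\to\infty}(\omega(x^n))^{1/n}=1$, so there is no "weighted analogue" to manufacture, and the genuinely hard steps you leave open in your orbit/idempotent sketch (above all the identification of the cut-down ideal $ZL^1(G,\omega)e_{\mathcal O}$ with the untwisted $L^1(S/K,\hat\omega)$, i.e.\ the triviality of the cocycle) never need to be closed here --- they are part of Mosak's argument, not of this paper's.
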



We need also the following well-known result.

\begin{lemma}\label{subalgebras}
Let $A$ be a commutative Banach algebra and $\{A_{\gamma}\}_{\gamma\in\Gamma}$ be a family of closed subalgebras of $A$ such that $A=\overline{\mathrm{lin}}\{A_{\gamma}\}_{\gamma\in\Gamma}$. If each $A_{\gamma}$ is weakly amenable, then so is $A$.
\end{lemma}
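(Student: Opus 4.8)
The plan is to reduce weak amenability of $A$ to that of its generating subalgebras by means of the standard computation with bounded derivations. Since $A$ is commutative, it is weakly amenable precisely when every bounded derivation $D\colon A\to A^*$ vanishes (indeed, it suffices to handle symmetric modules, but for the span argument we simply work with $A^*$ itself, or with an arbitrary symmetric bimodule $E$, and show $D=0$). So fix a symmetric Banach $A$-bimodule $E$ and a bounded derivation $D\colon A\to E$; the goal is to prove $D\equiv 0$.

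First I would restrict $D$ to each subalgebra $A_\gamma$. Because $A$ is commutative, $E$ is a symmetric $A_\gamma$-bimodule, and $D|_{A_\gamma}\colon A_\gamma\to E$ is a bounded derivation into a symmetric $A_\gamma$-module. By hypothesis $A_\gamma$ is weakly amenable, and since weak amenability of a commutative Banach algebra forces every bounded derivation into every symmetric bimodule to vanish (this is the standard equivalence, e.g.\ the characterisation used in \cite{BCD} together with \cite[Proposition~2.8.63]{Dales}), we get $D(a)=0$ for all $a\in A_\gamma$ and all $\gamma\in\Gamma$. Hence $D$ vanishes on $\bigcup_\gamma A_\gamma$, and by linearity on $\mathrm{lin}\{A_\gamma\}_{\gamma\in\Gamma}$. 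Finally, $D$ is norm-continuous and $\mathrm{lin}\{A_\gamma\}$ is dense in $A$, so $D\equiv 0$ on all of $A$. Therefore $A$ is weakly amenable.

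One subtlety worth spelling out: the subtle point is not the density/continuity step, which is routine, but the passage from ``weakly amenable'' for $A_\gamma$ to ``every bounded derivation into every symmetric $A_\gamma$-bimodule is zero.'' For a general Banach algebra weak amenability only concerns derivations into the dual module, but for a \emph{commutative} algebra these notions coincide, and this is exactly where commutativity of $A$ (hence of each $A_\gamma$) is used. The rest of the argument uses nothing about the specific structure of $A$, so this is genuinely a soft lemma; no estimates beyond $\|D(a)\|\le\|D\|\,\|a\|$ are needed. Consequently I do not anticipate a real obstacle — the only care required is to invoke the correct form of the characterisation of weak amenability in the commutative setting before taking limits.
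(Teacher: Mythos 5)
Your proof is correct, and it is exactly the standard argument behind this lemma, which the paper states as ``well-known'' and does not prove (it is essentially \cite[Proposition~2.8.66]{Dales}): restrict a bounded derivation into a symmetric module to each $A_\gamma$, kill it there using the commutative characterisation of weak amenability via symmetric bimodules \cite[Theorem~2.8.63]{Dales}, and conclude by linearity and continuity on the dense span. You also correctly isolate the one non-trivial point, namely that for commutative algebras weak amenability is equivalent to the vanishing of all bounded derivations into all symmetric Banach bimodules, which is precisely where commutativity is needed.
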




We note that $L^1(S/K,\hat{\omega})$ in Lemma~\ref{mosak} is a commutative Beurling algebra. So \cite[Theorem~3.1]{Zhang} applies for the weak amenability of it. This leads to the following result.

%


\begin{theorem}\label{G/K}
Let $G$ be an [FD]$^-$ group and $\omega\ge 1$ be a continuous weight on $G$ satisfying
\begin{equation}\label{33}
\sup_{n\in\mathbb{N}}\, \frac{n}{\omega(x^n)\omega(x^{-n})}=\infty\quad  ( x\in G).
\end{equation}
Then $ZL^1(G,\omega)$ is weakly amenable.
\end{theorem}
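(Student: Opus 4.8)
The plan is to reduce the statement to the commutative Beurling algebra case via the structural decomposition in Lemma~\ref{mosak}, and then invoke \cite[Theorem~3.1]{Zhang}. First I would check that the hypotheses of Lemma~\ref{mosak} are met: the condition \eqref{33} forces, for each fixed $x\in G$, that $\omega(x^n)\omega(x^{-n})$ grows slower than any linear function along a subsequence, and in particular that $\omega(x^n)\to\infty$ sub-exponentially; combined with $\omega\ge 1$ and submultiplicativity this yields $\lim_{n\to\infty}(\omega(x^n))^{1/n}=1$ for all $x\in G$ (one uses that $(\omega(x^n))^{1/n}$ is a decreasing-to-its-infimum type sequence by Fekete's lemma applied to $\log\omega(x^n)$, and that the infimum, being $\ge 1$ and bounded by the sub-exponential growth extracted from \eqref{33}, must equal $1$). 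So Lemma~\ref{mosak} applies with $K$ a compact normal subgroup such that $G/K$ is abelian, and $\hat\omega(xK)=\inf_{k\in K}\omega(xk)$.

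Next, Lemma~\ref{mosak} writes $ZL^1(G,\omega)$ as the closed linear span of a family of complemented ideals, each isomorphic as a Banach algebra to a commutative Beurling algebra $L^1(S/K,\hat\omega)$ for various open normal subgroups $S$ of $G$. By Lemma~\ref{subalgebras}, it suffices to show that each such $L^1(S/K,\hat\omega)$ is weakly amenable. Since $L^1(S/K,\hat\omega)$ is a commutative Beurling algebra on the abelian group $S/K$, by \cite[Theorem~3.1]{Zhang} it is weakly amenable if and only if there is no non-trivial continuous group homomorphism $\Psi:S/K\to\mathbb{C}$ with $\sup_{n}\, n/\bigl(\hat\omega(z^n)\hat\omega(z^{-n})\bigr)<\infty$ for some $z\in S/K$ with $\Psi(z)\neq 0$ — more precisely, weak amenability fails exactly when such a $\Psi$ exists, and the obstruction is detected on the cyclic subgroup generated by a single element. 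So I would argue by contradiction: if some $L^1(S/K,\hat\omega)$ were not weakly amenable, there would be $z=xK\in S/K$ with $\sup_n n/(\hat\omega(z^n)\hat\omega(z^{-n}))<\infty$. I would then transfer this back to $G$: from the definition $\hat\omega(xK)=\inf_{k\in K}\omega(xk)$ and the fact that $K$ is compact and normal, one gets constants so that $\hat\omega((xK)^n)=\hat\omega(x^nK)\ge c^{-1}\,\omega(x^n t_n)$ for suitable $t_n\in K$; using submultiplicativity and compactness of $K$ (so $\sup_{k\in K}\omega(k)=:M<\infty$ and similarly for $K^{-1}$) one bounds $\omega(x^n)\le M\,\hat\omega(x^nK)\le M\,\hat\omega((xK)^n)$, and symmetrically for $x^{-n}$. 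Hence $\omega(x^n)\omega(x^{-n})\le M^2\,\hat\omega((xK)^n)\hat\omega((xK)^{-n})$, which would give $\sup_n n/(\omega(x^n)\omega(x^{-n}))\ge M^{-2}\sup_n n/(\hat\omega((xK)^n)\hat\omega((xK)^{-n})) $ finite, contradicting \eqref{33}. Therefore every $L^1(S/K,\hat\omega)$ is weakly amenable, and so is $ZL^1(G,\omega)$.

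The main obstacle I anticipate is the careful comparison between $\omega$ on $G$ and $\hat\omega$ on $G/K$ along cyclic subgroups: one must be sure that the infimum over the compact fibre $K$ in the definition of $\hat\omega$, when iterated along powers $x^nK$, does not lose the essential growth information encoded in $\omega(x^n)$. This is exactly where compactness of $K$ (giving uniform bounds $\sup_{k\in K^{\pm 1}}\omega(k)<\infty$ from continuity of $\omega$) is used, and it requires being a little attentive because $x^n t$ for $t\in K$ need not lie in $x^n K$ unless one writes things as $x^n K = \{x^n k : k\in K\}$ and uses normality only indirectly. A secondary point needing care is the verification that \eqref{33} implies $\lim_n(\omega(x^n))^{1/n}=1$, so that Lemma~\ref{mosak} is genuinely applicable; this is a short Fekete-type argument but should be spelled out. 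Everything else is a direct assembly of Lemmas~\ref{mosak} and~\ref{subalgebras} with \cite[Theorem~3.1]{Zhang}.
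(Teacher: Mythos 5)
Your overall strategy coincides with the paper's: verify that \eqref{33} forces $\lim_n(\omega(x^n))^{1/n}=1$ so that Lemma~\ref{mosak} applies, then reduce via Lemma~\ref{subalgebras} to the weak amenability of each commutative piece $L^1(S/K,\hat\omega)$ and invoke \cite[Theorem~3.1]{Zhang} by testing the homomorphism condition along the cyclic subgroup generated by an element where the homomorphism is nonzero. The first step is fine (your Fekete-type argument is a legitimate alternative to the paper's direct estimate $\omega(x^m)\le c_\eps(1+\eps)^m$).

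However, the transfer of \eqref{33} from $G$ to $G/K$ is broken as written: you prove $\omega(x^n)\le M\,\hat\omega(x^nK)$ with $M=\sup_{k\in K}\omega(k)$, hence $\omega(x^n)\omega(x^{-n})\le M^2\,\hat\omega(z^n)\hat\omega(z^{-n})$, and from this you conclude
$\sup_n n/(\omega(x^n)\omega(x^{-n}))\ge M^{-2}\sup_n n/(\hat\omega(z^n)\hat\omega(z^{-n}))$, calling the right-hand side finite and declaring a contradiction with \eqref{33}. A lower bound by a finite quantity does not contradict the supremum being infinite, so no contradiction is obtained; your comparison inequality points the wrong way for the implication you need (namely $\sup_n n/(\hat\omega(z^n)\hat\omega(z^{-n}))<\infty \Rightarrow \sup_n n/(\omega(x^n)\omega(x^{-n}))<\infty$). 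The fix is immediate and is exactly what the paper uses: from the definition $\hat\omega(xK)=\inf_{k\in K}\omega(xk)$, taking $k=e$ gives the \emph{trivial} inequality $\hat\omega(x^nK)\le\omega(x^n)$, whence
$\frac{n}{\hat\omega(z^n)\hat\omega(z^{-n})}\ge\frac{n}{\omega(x^n)\omega(x^{-n})}$,
and \eqref{33} forces the left-hand supremum to be infinite, so the required homomorphism condition on $S/K$ holds directly (no contradiction argument, and no compactness of $K$ or bound $M$, is needed at this point). With that one-line correction your proof is the paper's proof.
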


\begin{proof}
First we show that (\ref{33}) implies that $\lim_{n\to \infty} (\omega(x^n))^{1/n}=1$ for every $x\in G$. Since $\omega\ge 1$, it suffices to prove that
$$
\limsup_{n\to \infty} (\omega(x^n))^{1/n}\le 1\quad (x\in G).
$$
Fix $x\in G$ and let $\varepsilon>0$ be arbitrary. Because $\lim_{n\to\infty} n^{1/n}=1$, there exists $N_{\varepsilon}\in\mathbb{N}$ such that $n^{1/n}\le(1+\varepsilon)$ for every $n\ge N_{\varepsilon}$. Using the assumption (\ref{33}) and the inequality $\omega\ge 1$, we can find $n_{\eps}>N_{\eps}$ such that
$$
\omega(x^{n_{\eps}})\le\omega(x^{n_{\eps}})\omega(x^{-n_{\eps}})\le n_{\eps}=(n_{\eps}^{1/n_{\eps}})^{n_{\eps}}\le(1+\eps)^{n_{\eps}}.
$$
For any $m\in\mathbb{N}$ there exist $k\in\mathbb{N}\cup\{0\}$ and $0\le l<n_{\eps}$ such that $m=kn_{\eps}+l$. Using the weight inequality for $\omega$, we can make the following estimates
\begin{align*}
\omega(x^m)&=\omega(x^{kn_{\eps}+l})\le(\omega\left(x^{n_{\eps}})\right)^k\omega(x^l) \le(1+\eps)^{kn_{\eps}}\omega(x^l)\\&= \frac{(1+\eps)^m\omega(x^l)}{(1+\eps)^l}\le c_{\eps}(1+\eps)^m,
\end{align*}
where
$$
c_{\eps}=\sup_{0\le l<n_{\eps}}\frac{\omega(x^l)}{(1+\eps)^l}
$$
is a constant that does not depend on $m$. It follows that
$$
\limsup_{n\to \infty} (\omega(x^n))^{1/n}\le \limsup_{n\to \infty} (c_{\eps}(1+\eps)^n)^{1/n}=1+\eps.
$$
Since $\eps>0$ was arbitrary, we obtain that $\limsup_{n\to \infty} (\omega(x^n))^{1/n}\le1$, as desired.

So, the condition of Lemma~\ref{mosak} is satisfied. Then there exists a family of complemented ideals $\{J_{\gamma}\}_{\gamma\in\Gamma}$ of $ZL^1(G,\omega)$ such that $\overline{\mathrm{lin}}\{J_{\gamma}\}_{\gamma\in\Gamma}=ZL^1(G,\omega)$ and for each $\gamma\in\Gamma$ there exists an open subgroup $S_{\gamma}\supset K$ of $G$ for which $J_{\gamma}\simeq L^1(S_{\gamma}/K,\hat{\omega})$.  Let $\Phi: S_{\gamma}/K \to \mathbb{C}$ be a non-trivial continuous group homomorphism. Choose $t_{\gamma}\in S_{\gamma}/K$ so that $\Phi(t_{\gamma})\ne0$. Then
\begin{equation}\label{2}
\sup_{t\in\, S_{\gamma}/K}\,\frac{|\Phi(t)|}{\hat{\omega}(t)\hat{\omega}(t^{-1})}\ge \sup_{n\in\mathbb{N}}\, \frac{|\Phi(t_{\gamma}^n)|}{\hat{\omega}(t_{\gamma}^n)\hat{\omega}(t_{\gamma}^{-n})}=\sup_{n\in\mathbb{N}}\, \frac{n\,|\Phi(t_{\gamma})|}{\hat{\omega}(t_{\gamma}^n)\hat{\omega}(t_{\gamma}^{-n})}.
\end{equation}
Let $x_{\gamma}\in S_{\gamma}$ be a representative of $t_{\gamma}$, i.e. $x_{\gamma}K=t_{\gamma}$. We note that, for each $x\in G$, 
$$
{\hat\omega}(xK)=\inf_{k\in K}\,\omega(xk)\le\omega(x).
$$
 In particular, $\hat{\omega}(t_{\gamma}^n)\le \omega(x_{\gamma}^n)$ and $\hat{\omega}(t_{\gamma}^{-n})\le \omega(x_{\gamma}^{-n})$ ($n\in\mathbb{N}$).
Combining this with condition (\ref{33}) and (\ref{2}), we obtain
$$
\sup_{t\in S_{\gamma}/K}\frac{|\Phi(t)|}{\hat{\omega}(t)\hat{\omega}(t^{-1})}\ge \sup_{n\in\mathbb{N}}\,\frac{n\,|\Phi(t_{\gamma})|} {\hat{\omega}(t_{\gamma}^n)\hat{\omega} (t_{\gamma}^{-n})}\ge \sup_{n\in\mathbb{N}}\,\frac{n\,|\Phi(t_{\gamma})|}{\omega(x_{\gamma}^n) \omega(x_{\gamma}^{-n})}=\infty.
$$
According to \cite[Theorem~3.1]{Zhang}, this implies that $J_{\gamma}\simeq L^1(S_{\gamma}/K,\hat{\omega})$ is weakly amenable. Then Lemma~\ref{subalgebras} applies.
\end{proof}





We now apply Theorem~\ref{G/K} to compactly generated [FC]$^-$ groups, which are, in fact, [FD]$^-$ groups according to \cite[Theorem~3.20]{GM}.

Let $G$ be a compactly generated locally compact group. Then there is an open symmetric neighbourhood $U$ of the identity in $G$ with compact closure and satisfying $G=\bigcup_{n=1}^{\infty} U^n$. Following~\cite{Mosak3}, we consider the length function $|\cdot|: G\to\mathbb{N}$ defined by
$$
|x|=\min\{n\in\mathbb{N}: x\in U^n\}\quad (x\in G).
$$
It is readily checked that $|x| \geq 1$ ($x\in G$), and for every $\alpha\geq 0$  the corresponding polynomial weight $\omega_{\alpha}(x)=(1+|x|)^{\alpha}$ ($x\in G$) is, indeed, an upper semicontinuous weight on $G$. As addressed in the introduction section, $\omega_\al$ is equivalent to a continuous weight. 

\begin{theorem}\label{polynomial}
Let $G$ be a compactly generated non-compact [FC]$^-$ group and $\omega_{\alpha}$ be the weight on $G$ defined as above. Then $ZL^1(G,\omega_{\alpha})$ is weakly amenable if and only if $0\leq\alpha<1/2$. 
\end{theorem}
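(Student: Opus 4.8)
The plan is to prove the two implications separately, deducing weak amenability when $0\leq\alpha<1/2$ from Theorem~\ref{G/K} and the failure of weak amenability when $\alpha\geq 1/2$ from Proposition~\ref{infc}. Since $\omega_\alpha$ is equivalent to a continuous weight and equivalent weights yield isomorphic central Beurling algebras, and since neither condition~(\ref{33}) nor the finiteness of a supremum of the type appearing in~(\ref{Phi}) is affected by passing to an equivalent weight, I may and will assume from the outset that $\omega_\alpha$ is continuous; note also that $\omega_\alpha(x)=(1+|x|)^\alpha\geq 2^\alpha\geq 1$ because $|x|\geq 1$, so the normalization $\omega\geq 1$ needed in Theorem~\ref{G/K} is available (after a harmless rescaling of the continuous representative). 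I will use only two elementary properties of the length function: subadditivity $|xy|\leq|x|+|y|$, hence $|x^n|\leq n|x|$; and symmetry $|x^{-1}|=|x|$, which holds because $U$ is symmetric, so that $\omega_\alpha(t)\omega_\alpha(t^{-1})=(1+|t|)^{2\alpha}$.

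\emph{Sufficiency.} Suppose $0\leq\alpha<1/2$. A compactly generated [FC]$^-$ group is an [FD]$^-$ group by \cite[Theorem~3.20]{GM}, so I only have to check hypothesis~(\ref{33}) of Theorem~\ref{G/K}. For every $x\in G$ and $n\in\mathbb{N}$,
\[
\frac{n}{\omega_\alpha(x^n)\omega_\alpha(x^{-n})}=\frac{n}{(1+|x^n|)^{2\alpha}}\geq\frac{n}{(1+n|x|)^{2\alpha}},
\]
and since $1-2\alpha>0$ the last expression tends to $\infty$ as $n\to\infty$; hence the supremum over $n$ is infinite for every $x\in G$. Theorem~\ref{G/K} now gives that $ZL^1(G,\omega_\alpha)$ is weakly amenable.

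\emph{Necessity.} Suppose $\alpha\geq 1/2$. By Proposition~\ref{infc} it suffices to exhibit a non-trivial continuous group homomorphism $\Phi:G\to\mathbb{C}$ with $\sup_{t\in G}|\Phi(t)|/(\omega_\alpha(t)\omega_\alpha(t^{-1}))<\infty$. Existence of $\Phi$: as $G$ is [FD]$^-$, pick a compact normal subgroup $K$ with $G/K$ abelian; since $G$ is non-compact and compactly generated, $G/K$ is a non-compact, compactly generated locally compact abelian group, hence by the structure theory of such groups it has $\mathbb{R}$ or $\mathbb{Z}$ as a quotient and therefore admits a non-trivial continuous homomorphism into $\mathbb{R}\subseteq\mathbb{C}$; composing with the quotient map $G\to G/K$ produces the desired $\Phi$. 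Growth bound: $\overline{U}$ is compact, so $M:=\sup_{u\in\overline{U}}|\Phi(u)|<\infty$, and if $|t|=n$ then $t=u_1\cdots u_n$ with $u_i\in U$, whence $\Phi(t)=\sum_{i=1}^n\Phi(u_i)$ and $|\Phi(t)|\leq nM\leq M(1+|t|)$. Consequently
\[
\frac{|\Phi(t)|}{\omega_\alpha(t)\omega_\alpha(t^{-1})}\leq\frac{M(1+|t|)}{(1+|t|)^{2\alpha}}=M(1+|t|)^{1-2\alpha}\leq M,
\]
because $1-2\alpha\leq 0$ and $1+|t|\geq 1$. Proposition~\ref{infc} then shows that $ZL^1(G,\omega_\alpha)$ is not weakly amenable.

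The reduction to a continuous weight and the length-function inequalities are routine; the one step that needs genuine input is the construction of $\Phi$ in the necessity direction, which rests on the structure theorem for non-compact compactly generated locally compact abelian groups (to know $G/K$ surjects onto $\mathbb{R}$ or $\mathbb{Z}$) together with the Lipschitz estimate $|\Phi(t)|\leq M(1+|t|)$ --- these two facts, balanced against the exponent $2\alpha$ in $\omega_\alpha(t)\omega_\alpha(t^{-1})$, are exactly what make the critical value $\alpha=1/2$ emerge.
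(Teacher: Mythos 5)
Your proposal is correct and follows essentially the same route as the paper: verify condition~(\ref{33}) via $|x^{-1}|=|x|$ and $|x^n|\leq n|x|$ and invoke Theorem~\ref{G/K} for $\alpha<1/2$, and for $\alpha\geq 1/2$ build $\Phi$ by composing the quotient map $G\to G/K$ with a surjection of the non-compact compactly generated abelian group $G/K$ onto $\mathbb{R}$ or $\mathbb{Z}$ (structure theorem), then use the Lipschitz bound $|\Phi(t)|\leq M|t|$ and Proposition~\ref{infc}. No gaps; the only cosmetic difference is that you make the reduction to a continuous equivalent weight and the normalization $\omega\geq 1$ explicit up front, which the paper handles in passing.
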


\begin{proof}
From the definition of the length function we have $|x^{-1}|=|x|$ and $|x^n|\le n|x|$ ($x\in G$, $n\in\mathbb{N}$). If $0\leq\alpha<1/2$, then
\begin{align*}
\sup_{n\in\mathbb{N}}\, \frac{n}{\omega_{\alpha}(x^n)\omega_{\alpha}(x^{-n})}&= \sup_{n\in\mathbb{N}}\, \frac{n}{(1+|x^n|)^{\alpha}(1+|x^{-n}|)^{\alpha}}\\ &\ge \sup_{n\in\mathbb{N}}\, \frac{n}{(1+n|x|)^{2\alpha}}=\infty\qquad (x\in G).
\end{align*}
This is still true if $\omega_\al$ is replaced by a continuous equivalent weight. Therefore, $ZL^1(G,\omega_{\alpha})$ is weakly amenable by Theorem~\ref{G/K}.

To prove the converse we let $K$ be a compact subgroup of $G$ such that $G/K$ is abelian. The quotient group $H=G/K$ is clearly still compactly generated. By the structure theorem for compactly generated locally compact abelian groups \cite[Theorem~II.9.8]{H-R}, $H$ is topologically isomorphic to $\R^m\times\bZ^n\times F$ for some integers $m$ and $n$ and some compact abelian group $F$. Since $G$ is not compact, neither is $H$. Then either $\R$ or $\bZ$ is a quotient group of $H$. Thus there is a non-trivial continuous group homomorphism $\phi$: $H\to \R$. Then $\Phi = \phi\circ q$: $G \to \R$ is a non-trivial continuous group homomorphism, where $q$: $G\to H$ is the quotient map. If $\alpha \geq 1/2$, this $\Phi$ satisfies the inequality~(\ref{Phi}) with $\omega = \omega_\alpha$. In fact, for $x\in G$ there is a smallest $k\in \bN$ such that $x\in U^k$. We have $|x| = k$ and 
\[
|\Phi(x)| \leq c_0 k = c_0 |x|,
\]
where $c_0 = \sup_{g\in U} |\Phi(g)|$ which is finite since $\overline U$ is compact. This leads to inequality~(\ref{Phi}) for $\omega = \omega_\al$ (and also for any continuous $\omega$ equivalent to $\omega_\al$) since $\al \geq 1/2$.
Hence $L^1(G, \omega_\al)$ is not weakly amenable due to Proposition~\ref{infc}.
\end{proof}

\begin{remark}
Consider again the general [FD]$^-$ group $G$. Let $K$ be a compact normal subgroup of it such that $G/K$ is commutative. If there is a continuous non-trivial group homomorphism $\Phi$: $G \to \mathbb C$ such that (\ref{Phi}) holds, then $Z\L1o$ is not weakly amenable from Proposition~\ref{infc}. If there is no such $\Phi$, then there is no such $\Phi$ for $G/K$ with the weight $\hat\omega$. Then $L^1(G/K, \hat\omega)$ is weakly amenable due to Theorem~\cite[Theorem~3.1]{Zhang}. We want to know whether $L^1(S/K,\hat\omega)$ is weakly amenable for any open subgroup $S$ of $G$ containing $K$. If this is true we will then obtain a characterization for the weak amenability of Beurling algebras on an [FD]$^-$ group. We note that $S/K$ is an open subgroup of $G/K$. However, in general weak amenability of a Beurling algebra does not pass to the Beurling algebra on a subgroup (see section 5 of \cite{SZ1}).
\end{remark}

The situation is simple when $G/K$ is isomorphic to $\mathbb R$ or $\mathbb Z$.

\begin{proposition}\label{rz}
Suppose that $G$ is a locally compact group and has a compact normal subgroup $K$ such that $G/K\simeq\mathbb{R}$ or $G/K\simeq\mathbb{Z}$. Let $\omega\ge 1$ be a weight on $G$. Then $ZL^1(G,\omega)$ is weakly amenable if and only if there is no non-trivial continuous group homomorphism $\Phi:~G\to\mathbb{C}$ such that
\begin{equation}\label{34}
\sup_{t\in\, G}\,\frac{|\Phi(t)|}{\omega(t)\omega(t^{-1})}<\infty.
\end{equation}
\end{proposition}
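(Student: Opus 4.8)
The plan is to reduce Proposition~\ref{rz} to the cases already settled in the paper, by exploiting the very restrictive structure forced by the hypothesis $G/K \simeq \mathbb{R}$ or $\mathbb{Z}$. The necessity direction is immediate: if such a $\Phi$ exists, then $G$ is certainly an [FC]$^-$ group (indeed an [FD]$^-$ group, since $G/K$ is abelian with $K$ compact normal), so Proposition~\ref{infc} applies directly and $ZL^1(G,\omega)$ fails to be weakly amenable. So the whole content is in the sufficiency direction.

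For sufficiency, assume there is no non-trivial continuous $\Phi\colon G\to\mathbb C$ satisfying~(\ref{34}); I want to conclude $ZL^1(G,\omega)$ is weakly amenable. The first step is to reduce to the case $\omega\ge 1$ with $\lim_n(\omega(x^n))^{1/n}=1$ for all $x$, so that Lemma~\ref{mosak} becomes available. If some $x_0$ has $\lim_n(\omega(x_0^n))^{1/n}=\rho>1$, then $x_0$ generates a copy of $\mathbb Z$ (or a discrete cyclic quotient) and I would produce a non-trivial homomorphism $\Phi$ dominated by $\omega(t)\omega(t^{-1})$ directly — more precisely, since $G/K$ is $\mathbb R$ or $\mathbb Z$, the cyclic subgroup generated by the image of $x_0$ is either all of $\mathbb Z$, a finite-index subgroup of $\mathbb Z$, or a cyclic subgroup of $\mathbb R$; a character on $G/K$ that is non-constant on this subgroup, composed with the quotient map, gives $\Phi$, and one checks that exponential growth of $\omega$ along $x_0^n$ forces $|\Phi(t)|/(\omega(t)\omega(t^{-1}))$ to stay bounded along the relevant directions. (Here one uses that $G/K\simeq\mathbb R$ or $\mathbb Z$ is one-dimensional, so ``most'' of $G$ is accounted for by powers of a single element modulo $K$.) Hence, under the standing hypothesis, necessarily $\lim_n(\omega(x^n))^{1/n}=1$ for every $x\in G$.

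Once that reduction is in place, I invoke Lemma~\ref{mosak}: $ZL^1(G,\omega)=\overline{\mathrm{lin}}\{J_\gamma\}_{\gamma\in\Gamma}$ with each $J_\gamma\simeq L^1(S_\gamma/K,\hat\omega)$ for an open normal subgroup $S_\gamma\supseteq K$ of $G$. Now $S_\gamma/K$ is an open subgroup of $G/K\simeq\mathbb R$ or $\mathbb Z$; the key point is that open subgroups here are extremely simple — $\mathbb R$ has no proper open subgroup, and every non-trivial subgroup of $\mathbb Z$ is $\simeq\mathbb Z$ — so each $S_\gamma/K$ is $\{0\}$, $\simeq\mathbb R$, or $\simeq\mathbb Z$. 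By \cite[Theorem~3.1]{Zhang}, $L^1(S_\gamma/K,\hat\omega)$ is weakly amenable unless there is a non-trivial continuous homomorphism $\psi\colon S_\gamma/K\to\mathbb C$ with $\sup_t|\psi(t)|/(\hat\omega(t)\hat\omega(t^{-1}))<\infty$. I must rule this out: given such a $\psi$, since $S_\gamma/K$ is $\mathbb R$ or $\mathbb Z$, I extend it to a non-trivial continuous homomorphism on all of $G/K\simeq\mathbb R$ or $\mathbb Z$ (every homomorphism from $\mathbb Z$ or $\mathbb R$ to $\mathbb C$ extends/restricts transparently — on $\mathbb Z$ it is determined by one value and extends to $\mathbb R$ if needed, and an open subgroup of $\mathbb Z$ is finite index so one rescales), pull back to $\Phi\colon G\to\mathbb C$ via the quotient map, and use $\hat\omega(xK)\le\omega(x)$ together with the sub-multiplicativity of $\omega$ to transfer the boundedness condition, contradicting the standing hypothesis. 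Thus every $J_\gamma$ is weakly amenable, and Lemma~\ref{subalgebras} finishes the proof.

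The main obstacle I anticipate is the first reduction — showing that the hypothesis on $G$ forces $\lim_n(\omega(x^n))^{1/n}=1$ and, relatedly, the bookkeeping that lets one pass homomorphisms and the growth condition between $G$, $G/K$, the open subgroups $S_\gamma/K$, and back. The topology of $\mathbb R$ versus $\mathbb Z$ must be handled uniformly, and one must be careful that ``extend the homomorphism'' genuinely works for $\mathbb Z$-quotients of $\mathbb Z$ (finite index, so fine) and that the compact normal subgroup $K$ contributes nothing (any homomorphism $G\to\mathbb C$ kills $K$ since $\mathbb C$ is torsion-free and $K$ is compact, so it automatically factors through $G/K$ — this observation is what makes the reduction to $G/K$ legitimate). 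None of these steps is deep, but assembling them correctly is where the care lies.
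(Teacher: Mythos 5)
Your proposal is correct in substance but takes a genuinely different route from the paper's for the sufficiency direction. The paper's proof is much shorter: since $\hat\omega(xK)\le\omega(x)$, the absence of a bad $\Phi$ on $G$ gives the absence of a bad homomorphism on $G/K$, so $L^1(G/K,\hat\omega)$ is weakly amenable by \cite[Theorem~3.1]{Zhang}; because $G/K\simeq\R$ or $\bZ$, \cite[Corollary~3.7]{Zhang} then yields $\sup_n n/(\hat\omega(t^n)\hat\omega(t^{-n}))=\infty$ on $G/K$, which lifts to condition~(\ref{33}) on $G$ via $\omega(x)\le c\,\hat\omega(xK)$ with $c=\max_{k\in K}\omega(k)$, and Theorem~\ref{G/K} finishes. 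You instead establish only the weaker hypothesis $\lim_n(\omega(x^n))^{1/n}=1$ needed for Lemma~\ref{mosak}, and then re-verify the weak amenability of each summand $L^1(S_\gamma/K,\hat\omega)$ by classifying the open subgroups of $\R$ and $\bZ$ and pushing putative bad homomorphisms back up to $G$. That works (and avoids citing \cite[Corollary~3.7]{Zhang}), but it essentially re-proves Theorem~\ref{G/K} in this special case, and your handling of the extension from $m\bZ$ to $\bZ$ and of the transfer $\hat\omega(tK)\hat\omega(t^{-1}K)\le\omega(t)\omega(t^{-1})$ is all fine. The one step you must actually write out is the first reduction: to show that $\lim_n(\omega(x_0^n))^{1/n}=\rho>1$ produces a homomorphism satisfying~(\ref{34}), you need the supremum over \emph{all} of $G$, not merely over the powers of $x_0$. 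This does follow, but only via an explicit estimate: the image of $x_0$ in $G/K$ is nonzero (else $x_0$ lies in the compact group $K$ and $\rho=1$), so every $t\in G$ factors as $t=x_0^q s$ with $s$ in a fixed compact set (the preimage in $G$ of a bounded fundamental domain for the cyclic subgroup generated by the image of $x_0$); submultiplicativity then gives $\omega(t)\ge\omega(x_0^{q})/M$ and $\omega(t^{-1})\ge\omega(x_0^{-q})/M$ with $M$ the supremum of $\omega$ over that compact set, while $|\Phi(t)|$ grows only linearly in $q$, so the ratio in~(\ref{34}) is bounded. As written, ``bounded along the relevant directions'' is a gesture at this estimate rather than the estimate itself; with it supplied, your proof is complete.
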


\begin{proof}
It suffices to show the sufficiency. If there is no non-trivial continuous group homomorphism $\Phi:G\to\mathbb{C}$ for which (\ref{34}) holds, then, as is known,  $L^1(G/K,\hat{\omega})$ is weakly amenable.
This in turn implies that 
$$
\sup_{n\in\mathbb{N}}\,\frac n{\hat{\omega}(t^n)\hat{\omega}(t^{-n})}=\infty\quad (t\in G/K)
$$
due to \cite[Corollary~3.7]{Zhang}. Since $\omega(x) \leq c\hat\omega(xK)$ for $x\in G$, where $$c=\max\{\omega(k): k\in K\},$$
the last condition leads to
$$
\sup_{n\in\mathbb{N}}\,\frac n{\omega(x^n)\omega(x^{-n})}=\infty\quad (x\in G).
$$
Then, applying Theorem~\ref{G/K}, we conclude that $ZL^1(G,\omega)$ is weakly amenable.
\end{proof}

The authors are grateful to the referees for their thoughtful comments and suggestions.

\bibliographystyle{plain}


\end{document}